\newcommand{\C}{\mathbb{C}}
\newcommand{\D}{\mathbb{D}}
\newcommand{\N}{\mathbb{N}}
\newcommand{\Z}{\mathbb{Z}}
\newcommand{\Fc}{\mathcal{F}}
\newcommand{\Lc}{\mathcal{L}}
\newcommand{\from}{\colon}
\DeclareMathOperator{\clos}{clos}
\DeclareMathOperator{\spec}{sp}
\renewcommand{\epsilon}{\varepsilon}
\renewcommand{\phi}{\varphi}
\renewcommand{\theta}{\vartheta}
\providecommand{\abs}[1]{\left\lvert#1\right\rvert}
\providecommand{\norm}[1]{\left\lVert#1\right\rVert}
\providecommand{\set}[1]{\left\{ #1\right\}}
\newtheorem{thm}{Theorem}
\newtheorem{lem}[thm]{Lemma}
\newtheorem{prop}[thm]{Proposition}
\newtheorem{cor}[thm]{Corollary}
\theoremstyle{definition}
\newtheorem{defn}[thm]{Definition}
\newtheorem{rem}[thm]{Remark}
\begin{document}

\title{Symmetries of the Feinberg-Zee Random Hopping Matrix}
\author{Raffael Hagger\footnote{raffael.hagger@tuhh.de}}
\maketitle

\begin{abstract}
We study the symmetries of the spectrum of the Feinberg-Zee Random Hopping Matrix introduced in \cite{FeZee} and studied in various papers therafter (e.g. \cite{ChaChoLi}, \cite{ChaChoLi2}, \cite{ChaDa}, \cite{Ha}, \cite{HoOrZee}). In \cite{ChaDa}, Chandler-Wilde and Davies proved that the spectrum of the Feinberg-Zee Random Hopping Matrix is invariant under taking square roots, which implied that the unit disk is contained in the spectrum (a result already obtained slightly earlier in \cite{ChaChoLi}). In a similar approach we show that there is an infinite sequence of symmetries at least in the periodic part of the spectrum (which is conjectured to be dense). Using these symmetries and the result of \cite{ChaDa}, we can exploit a considerably larger part of the spectrum than the unit disk. As a further consequence we find an infinite sequence of Julia sets contained in the spectrum. These facts may serve as a part of an explanation of the seemingly fractal-like behaviour of the boundary.
\end{abstract}

\textit{2010 Mathematics Subject Classification:} Primary 47B80; Secondary 47A10, 47B36

\textit{Keywords:} random hopping, random operator, spectrum, tridiagonal, periodic, symmetry

\section{Introduction}

In recent years some progress has been made in the study of non-self-adjoint random operators (see \cite{ChaChoLi}, \cite{ChaChoLi2}, \cite{ChaDa}, \cite{Ha}, \cite{HoOrZee} and references therein). However, still a lot of questions remain open even in the tridiagonal case. In particular, the spectrum of most tridiagonal random operators is yet unknown. Although the techniques used here are possibly also suited for more general types of operators, we focus on the Feinberg-Zee Random Hopping Matrix \cite{FeZee} here. It is defined as follows:
\[A^b := \begin{pmatrix} \ddots & \ddots & & & \\ \ddots & 0 & 1 & & \\ & b_0 & 0 & 1 & \\ & & b_1 & 0 & \smash{\ddots} \\ & & & \ddots & \ddots \end{pmatrix} \in \Lc(\ell^2(\Z))\]
for a random sequence $b \in \set{\pm 1}^{\Z}$, i.e. a sequence with randomly (i.i.d.) distributed entries. Similarly we define $A^c$ for arbitrary sequences $c \in \set{\pm 1}^{\Z}$. The spectrum
\[\spec(A^b) := \set{\lambda \in \C : A^b - \lambda I \text{ is not invertible}}\]
of $A^b$ is independent of the sequence $b$ in the following sense:
\begin{prop}
(e.g. \cite[Lemma 2.3, Theorem 2.5]{ChaChoLi2})
For a sequence of i.i.d. random variables $(b_j)_{j \in \Z}$ taking values in $\set{\pm 1}$ with non-zero probability each, the spectrum of $A^b$ is given by
\begin{equation} \label{eq1}
\spec(A^b) = \Sigma := \bigcup\limits_{c \in \set{\pm 1}^{\Z}} \spec(A^c)
\end{equation}
almost surely.
\end{prop}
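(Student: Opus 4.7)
The plan is to establish the two inclusions separately. The direction $\spec(A^b) \subseteq \Sigma$ is immediate, since for any realization of $b$ the set $\spec(A^b)$ appears as one of the sets united on the right-hand side of \eqref{eq1}. The substantive content lies in showing $\Sigma \subseteq \spec(A^b)$ almost surely.

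For this inclusion I would invoke the limit operator machinery for band-dominated operators on $\ell^2(\Z)$ due to Rabinovich, Roch and Silbermann. The operator $A^b$ is tridiagonal, hence banded, and its limit operators are precisely the operators $A^c$ where $c \in \set{\pm 1}^{\Z}$ arises as a pointwise limit of translates $(b_{j+n_k})_{j \in \Z}$ along some sequence $n_k \to \pm \infty$. Because the diagonals of $A^b$ take only finitely many values, any sequence of translates has a pointwise convergent subsequence (by a diagonal compactness argument), so $A^b$ is rich in the sense of the Rabinovich--Roch--Silbermann theory, and hence
\[ \spess(A^b) = \bigcup_{L \in \opspec(A^b)} \spec(L). \]

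The probabilistic step then bridges the two sides. By a standard Borel--Cantelli argument, under the hypothesis that each of $\pm 1$ occurs with positive probability, almost surely every finite word over $\set{\pm 1}$ appears infinitely often in $b$ in both directions. A diagonal construction then produces, for each prescribed $c \in \set{\pm 1}^{\Z}$, a shift sequence $n_k \to \infty$ along which $(b_{j + n_k})_{j}$ converges pointwise to $c$. Hence every $A^c$ is a limit operator of $A^b$ almost surely, so $\spec(A^c) \subseteq \spess(A^b) \subseteq \spec(A^b)$, and taking the union over $c \in \set{\pm 1}^{\Z}$ gives the required inclusion $\Sigma \subseteq \spec(A^b)$.

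The argument is really classical and the main obstacle is bookkeeping rather than genuine novelty: one has to cite the correct version of the limit operator theorem and verify the richness hypothesis for $A^b$. The probabilistic genericity lemma is elementary, but it is exactly the step that uses the assumption that both values $\pm 1$ are attained with positive probability; without it, some finite patterns could fail to appear and the corresponding $A^c$ would not arise as limit operators.
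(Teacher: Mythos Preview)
The paper does not actually prove this proposition; it is quoted as a known result with a citation to \cite[Lemma 2.3, Theorem 2.5]{ChaChoLi2}. Your outline is correct and is precisely the argument used in that reference: one shows that almost surely the random sequence $b$ is \emph{pseudo-ergodic} (every finite $\pm 1$-pattern occurs, via Borel--Cantelli), so that every $A^c$ with $c \in \set{\pm 1}^{\Z}$ arises as a limit operator of $A^b$, and then invokes the limit-operator description of the essential spectrum for rich band operators to obtain $\Sigma \subset \spess(A^b) \subset \spec(A^b)$.
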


In \cite{ChaChoLi} Chandler-Wilde, Chonchaiya and Lindner discovered a beautiful connection to the Sierpinski triangle. This connection was then used to show that the unit disk is contained in $\Sigma$, which disproved earlier conjectures that $\Sigma$ may be of fractal dimension. In \cite{ChaChoLi2} the same authors also gave an upper bound using the numerical range. This upper bound was further improved in \cite{Ha}. An image of $\Sigma$ (or more precisely: what it is conjectured to be) and the upper bounds computed in \cite{ChaChoLi2} and \cite{Ha} are provided in Figure 1.

\begin{figure}[h!]
\begin{center}
\includegraphics[scale = 1]{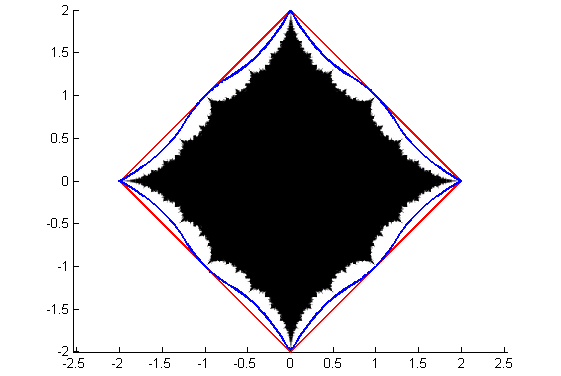}
\end{center}

\caption{The conjectured shape of $\Sigma$ (black), the upper bound computed in \cite{ChaChoLi2} (red) and the upper bound computed in \cite{Ha} (blue).}
\end{figure}

Another way of deriving lower bounds is using equation \eqref{eq1} directly. The spectra of many operators on the right hand side can be computed explicitly. This is in particular the case for periodic operators. We call an operator $A \in \Lc(\ell^2(\Z))$ (not necessarily tridiagonal) $m$-periodic if $A_{i,j} = A_{i+m,j+m}$ for all $i,j \in \Z$. In the tridiagonal case, we will also use the notation $A^k_{per}$ for $k \in \set{\pm 1}^m$, i.e.
\[A^k_{per} = \begin{pmatrix} \ddots & \ddots & & & & & & \\ \ddots & 0 & 1 & & & & & \\ & k_m & 0 & 1 & & & & \\ & & k_1 & \ddots & \ddots & & & \\ & & & \ddots & \ddots & 1 & & \\ & & & & k_m & 0 & 1 & \\ & & & & & k_1 & 0 & \ddots \\ & & & & & & \ddots & \ddots \end{pmatrix} \in \Lc(\ell^2(\Z)).\]
Note that $k$ is not unique because $(k_1, \ldots, k_m) \in \set{\pm 1}^m$ and $(k_1, \ldots, k_m,k_1, \ldots, k_m) \in \set{\pm 1}^{2m}$ define the same operator. We will make use of this fact later on.

The good thing about periodic operators is that they can be diagonalized in some sense.

\begin{prop} \label{thmD}
(e.g. \cite[Theorem 4.4.9]{Davies2})\\
Let $m \in \N$, let $A \in \Lc(\ell^2(\Z))$ be an $m$-periodic operator and define
\[a_r := \begin{pmatrix} A_{rm+1,1} & \hdots & A_{rm+1,m} \\ \vdots & \ddots & \vdots \\ A_{rm+m,1} & \hdots & A_{rm+m,m} \end{pmatrix}\]
for all $r \in \Z$. If $\sum\limits_{r \in \Z} \norm{a_r} < \infty$, then $A$ is unitarily equivalent to the (generalized) multiplication operator $M_a \in \Lc(L^2([0,2\pi),\C^m))$. The function $a \in L^{\infty}([0,2\pi),\C^{m \times m})$ is called the symbol of $A$ and given by
\[a(\phi) = \sum\limits_{r \in \Z} a_r e^{ir\phi} \quad (\phi \in [0,2\pi)).\]
Moreover, $\spec(A) = \set{\lambda \in \C : \det(a(\phi) - \lambda I_m) = 0 \text{ for some } \phi \in [0,2\pi)} = \bigcup\limits_{\phi \in [0,2\pi)} \spec(a(\phi))$.
\end{prop}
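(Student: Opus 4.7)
The plan is to reduce this to the standard Fourier diagonalisation of (block) Laurent operators on $\ell^2(\Z,\C^m)$. First I would block the scalar lattice $\Z$ into groups of $m$ consecutive coordinates via the natural unitary $U \from \ell^2(\Z) \to \ell^2(\Z,\C^m)$ sending $(x_n)_{n \in \Z}$ to $(X_r)_{r \in \Z}$ with $X_r = (x_{rm+1},\ldots,x_{rm+m})^T$. The periodicity condition $A_{i,j} = A_{i+m,j+m}$ is exactly what is needed so that $UAU^*$ becomes a genuine block Laurent operator: its $(r,s)$-block is an $m \times m$ matrix depending only on $r-s$, and a short index chase identifies it with $a_{r-s}$ in the notation of the statement.

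Next I would compose with the (inverse) discrete Fourier transform $\Fc \from \ell^2(\Z,\C^m) \to L^2([0,2\pi),\C^m)$ sending $(X_r)_{r \in \Z}$ to $\phi \mapsto \frac{1}{\sqrt{2\pi}}\sum_{r \in \Z} X_r e^{ir\phi}$ (suitably normalised). Under $\Fc$ a block Laurent operator with blocks $a_{r-s}$ is conjugated into multiplication by the Fourier series $\sum_{r \in \Z} a_r e^{ir\phi}$, which is precisely $a(\phi)$. The hypothesis $\sum_r \norm{a_r} < \infty$ guarantees absolute convergence in $\C^{m \times m}$, so $a$ is continuous on the torus and $M_a$ is bounded on $L^2([0,2\pi),\C^m)$ with $\norm{M_a} \le \sum_r \norm{a_r}$. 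The composition $\Fc U$ is the sought unitary equivalence.

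For the spectral description I would argue as follows. If $\lambda \notin \bigcup_\phi \spec(a(\phi))$, then by continuity of $a$ on the compact torus the map $\phi \mapsto (a(\phi) - \lambda I_m)^{-1}$ is uniformly bounded, so multiplication by it is a bounded two-sided inverse of $M_a - \lambda I$. Conversely, if $\lambda \in \spec(a(\phi_0))$ for some $\phi_0$, I would build a Weyl sequence for $M_a - \lambda I$ (or its adjoint) by multiplying an $L^2$-normalised bump function supported in a shrinking neighbourhood of $\phi_0$ by a corresponding (approximate) eigenvector of $a(\phi_0) - \lambda I_m$; continuity of $a$ makes this work, and so $\lambda \in \spec(M_a)$.

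The only real obstacle is the bookkeeping in the first step: verifying that the block at position $(r,s)$ of $UAU^*$ really equals $a_{r-s}$ (rather than $a_{s-r}$ or a shifted version), since the definition of $a_r$ in the statement freezes $s = 0$ in a specific way. Once this indexing is pinned down, the rest is standard Toeplitz/Laurent theory and raises no new difficulties.
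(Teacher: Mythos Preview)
The paper does not actually prove this proposition; it is quoted as a standard result from \cite[Theorem 4.4.9]{Davies2} and used as a black box thereafter. Your sketch is precisely the standard argument one finds in that reference (block $\ell^2(\Z)$ into $\ell^2(\Z,\C^m)$, conjugate by the discrete Fourier transform to obtain a matrix multiplication operator, then read off the spectrum using continuity of $a$ and a Weyl-sequence construction), and it is correct. The indexing concern you raise is real but resolves cleanly: by $m$-periodicity the $(p,q)$-block of $UAU^*$ has entries $A_{pm+i,\,qm+j}=A_{(p-q)m+i,\,j}$, which is exactly $(a_{p-q})_{i,j}$ as defined in the statement, so with your Fourier convention the block Laurent operator is conjugated into multiplication by $\sum_r a_r e^{ir\phi}=a(\phi)$.
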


In the following, we will always have a finite number of non-vanishing coefficients $a_r$. Thus we will not have to worry about the convergence of $\sum\limits_{r \in \Z} \norm{a_r}$. Proposition \ref{thmD} enables us to compute the spectra of periodic operators explicitly (numerically). We call the set
\[\pi_\infty := \bigcup\limits_{m \in \N} \bigcup\limits_{k \in \set{\pm 1}^m} \spec(A^k_{per})\]
the periodic part of $\Sigma$. Another related set is the set of eigenvalues of finite matrices of this kind. For $n \in \N$ and $k \in \set{\pm 1}^n$ we define
\[A^k_{fin} := \begin{pmatrix} 0 & 1 & & \\ k_1 & \ddots & \ddots & \\ & \ddots & \ddots & 1 \\ & & k_n & 0 \end{pmatrix} \in \C^{(n+1) \times (n+1)}.\]
Then we call the set
\[\sigma_\infty := \bigcup\limits_{n \in \N} \bigcup\limits_{k \in \set{\pm 1}^n} \spec(A^k_{fin})\]
the finite part of $\Sigma$. It is clear by equation \eqref{eq1} that $\pi_\infty$ is a subset of $\Sigma$. Furthermore, it was shown in \cite{ChaChoLi2} that $\sigma_\infty \subset \pi_\infty$ holds. It was then conjectured in \cite{ChaChoLi} that $\clos(\sigma_\infty) = \clos(\pi_\infty) = \Sigma$ and that $\Sigma$ is a simply connected set which is the closure of its interior and which has a fractal boundary. That the first equality holds was recently shown in \cite{Ha2} whereas the second equality and the other assertions remain open. Some light was shed on this question in \cite{ChaDa}, where it was proved that $\D \subset \clos(\pi_\infty)$. Combined with the result of \cite{Ha2} and equation \eqref{eq1}, this implies
\begin{equation} \label{eq2}
\D \subset \clos(\sigma_\infty) = \clos(\pi_\infty) \subset \Sigma.
\end{equation}
The result of Chandler-Wilde and Davies in \cite{ChaDa} is based on the observation that $\pi_\infty$ (and also $\Sigma$) is invariant under taking square roots, i.e.
\[\lambda^2 \in \pi_\infty \Longrightarrow \lambda \in \pi_\infty.\]
In this paper we extend this result to an infinite number of symmetries. We prove that for $p \in S$, where $S$ is an infinite set of polynomials made precise below, the following holds:
\[p(\lambda) \in \pi_\infty \Longrightarrow \lambda \in \pi_\infty.\]
In other words, $\pi_\infty$ (and hence also $\clos(\pi_\infty)$) is invariant under taking roots of polynomials $p \in S$. This implies that \eqref{eq2} can be extended to
\[p^{-1}(\D) \subset p^{-1}(\clos(\sigma_\infty)) = p^{-1}(\clos(\pi_\infty)) \subset \clos(\pi_\infty) \subset \Sigma\]
for all $p \in S$. Thus
\begin{equation} \label{eq2.5}
\bigcup\limits_{p \in S} p^{-1}(\D) \subset \clos(\sigma_\infty) = \clos(\pi_\infty) \subset \Sigma.
\end{equation}
This improvement in comparison with \eqref{eq2} is significant as Figure 2 shows.

\begin{figure}[ht!]
\begin{center}
\includegraphics[width = .75\textwidth, trim = 15mm 5mm 10mm 5mm, clip = true]{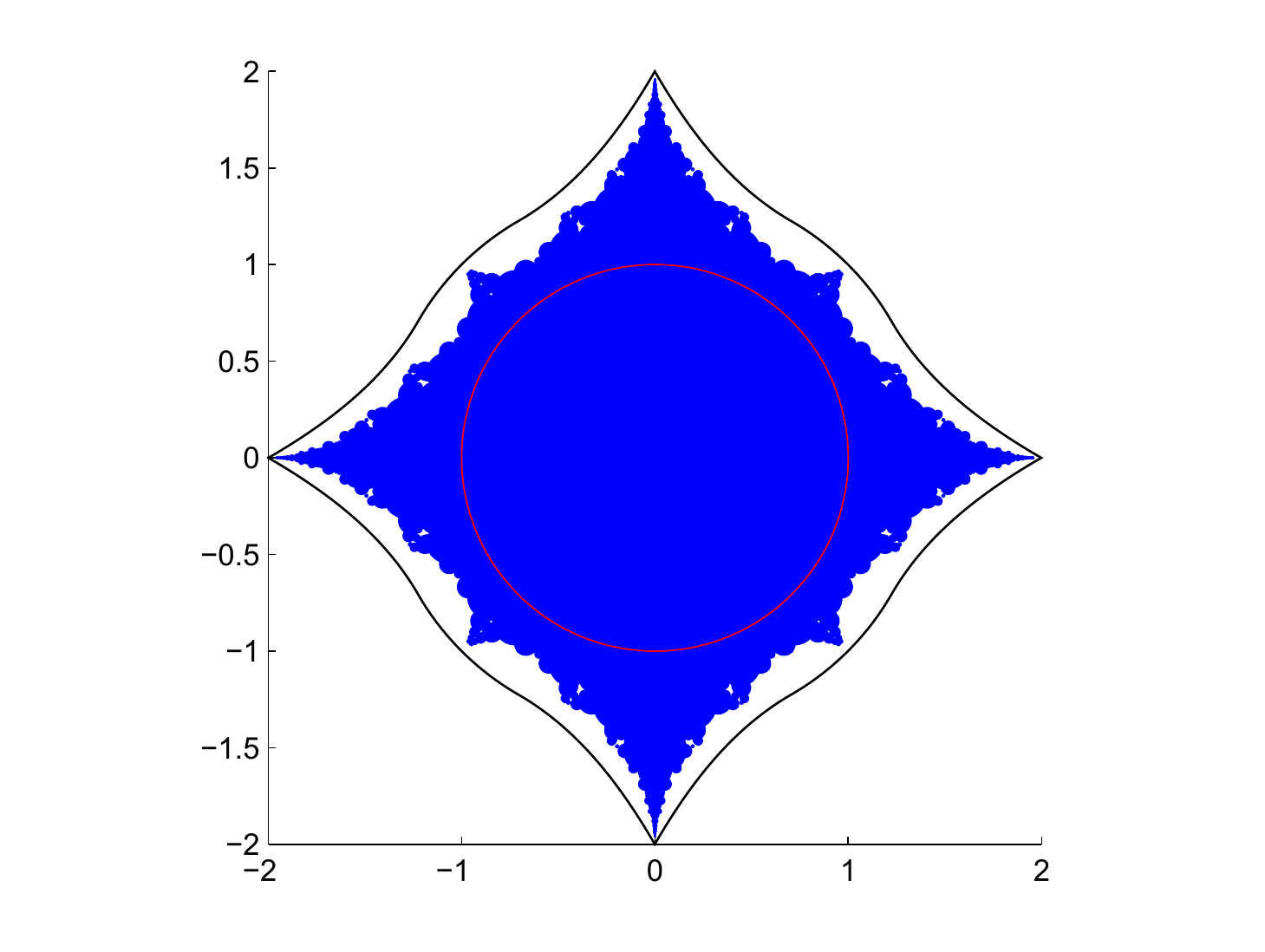}
\caption{The left-hand side of \eqref{eq2.5} (polynomials in $S$ up to order 15 to be precise), the upper bound to $\Sigma$ computed in \cite{Ha} and the unit circle as a reference.}
\end{center}
\end{figure}

Clearly, this construction can also be iterated, i.e.
\[p^{-n}(\D) \subset p^{-n}(\clos(\sigma_\infty)) = p^{-n}(\clos(\pi_\infty)) \subset \clos(\pi_\infty) \subset \Sigma\]
and hence
\[\bigcup\limits_{n \in \N} p^{-n}(\D) \subset \bigcup\limits_{n \in \N} p^{-n}(\clos(\sigma_\infty)) = \bigcup\limits_{n \in \N} p^{-n}(\clos(\pi_\infty)) \subset \clos(\pi_\infty) \subset \Sigma.\]
This implies that $\Sigma$ contains an infinite sequence of (presumably filled) Julia sets (see Remark \ref{remark} below), e.g. the set indicated in Figure 3.

\begin{figure}[ht!]
\begin{center}
\includegraphics[width = .75\textwidth,trim = 15mm 5mm 10mm 5mm, clip = true]{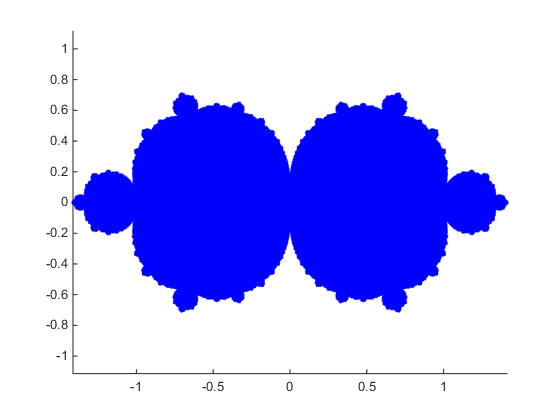}
\caption{The filled Julia set corresponding to $p(\lambda) = \lambda^3 - \lambda$.}
\end{center}
\end{figure}

These two approaches can also be combined as follows. Let $T$ be the closure of $S$ with respect to composition, i.e.
\[T = \set{q \from \C \to \C : q = p_1 \circ \ldots \circ p_n \text{ for } p_1, \ldots, p_n \in S, n \in \N}.\]
Then
\[\bigcup\limits_{q \in T} q^{-1}(\D) \subset \clos(\sigma_\infty) = \clos(\pi_\infty) \subset \Sigma.\]
In this way one can construct even more Julia sets that are contained in $\Sigma$. This richness of symmetries might be a part of an explanation of the seemingly fractal boundary of $\Sigma$. Surely, this observation needs further investigation.

Note that in \cite{ChaDa} it was also shown that
\[\lambda \in \Sigma \Longrightarrow \pm\sqrt{\lambda} \in \Sigma.\]
Here we only have the (possibly weaker) statement
\[\lambda \in \Sigma \Longrightarrow p^{-1}(\set{\lambda}) \cap \Sigma \neq \emptyset\]
for $p \in S$.

In addition to the polynomial symmetries mentioned above, there are also the following symmetries (see \cite[Lemma 3.4]{ChaChoLi2}):
\[\lambda \in \pi_\infty \Longrightarrow i\lambda,\bar{\lambda} \in \pi_\infty \quad \text{and} \quad \lambda \in \Sigma \Longrightarrow i\lambda,\bar{\lambda} \in \Sigma.\]

We start with some well-known preparatory results and end up with the two main theorems of this paper. A short list of polynomials in $S$ and selected pictures of new subsets of $\Sigma$ are provided at the end.

\section{Symmetries}

Let $m \in \N$ and $k \in \set{\pm 1}^m$. Then we denote the corresponding $m$-periodic operator
\[\begin{pmatrix} \ddots & \ddots & & & & & & \\ \ddots & 0 & 1 & & & & & \\ & k_m & 0 & 1 & & & & \\ & & k_1 & \ddots & \ddots & & & \\ & & & \ddots & \ddots & 1 & & \\ & & & & k_m & 0 & 1 & \\ & & & & & k_1 & 0 & \ddots \\ & & & & & & \ddots & \ddots \end{pmatrix} \in \Lc(\ell^2(\Z))\]
by $A^k_{per}$. By Proposition \ref{thmD}, we can use the symbol $a^k$ to compute the spectrum of $A^k_{per}$. In our case, due to tridiagonality, the formula can be simplified as follows.

\begin{lem} \label{lemma1}
Let $m \in \N$, $k \in \set{\pm 1}^m$ and let $a^k$ denote the symbol of $A^k_{per}$. Then the only $\phi$-dependent term in the characteristic polynomial of $a^k(\phi)$ is the term of order zero. More precisely, there exists a polynomial $p_k \from \C \to \C$ of degree $m$ such that
\begin{equation} \label{eq3}
\det(a^k(\phi) - \lambda I_m) = (-1)^m\left(p_k(\lambda) - e^{i\phi}\prod\limits_{j = 1}^m k_j - e^{-i\phi}\right)
\end{equation}
for all $\phi \in [0,2\pi)$. The polynomial $p_k$ is monic and given by
\[p_k(\lambda) = (-1)^m\left(\det \begin{pmatrix} -\lambda & 1 & & \\ k_1 & \ddots & \ddots & \\ & \ddots & \ddots & 1 \\ & & k_{m-1} & -\lambda \end{pmatrix} - k_m\det \begin{pmatrix} -\lambda & 1 & & \\ k_2 & \ddots & \ddots & \\ & \ddots & \ddots & 1 \\ & & k_{m-2} & -\lambda \end{pmatrix}\right).\]
Furthermore, $p_k$ is an even (odd) function if $m$ is even (odd).
\end{lem}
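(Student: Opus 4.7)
The first step is to make the symbol $a^k(\phi)$ explicit via the recipe in Proposition~\ref{thmD}. A direct computation of the $m \times m$ blocks $a_r$ from the definition shows that $a_r = 0$ for $|r| \geq 2$; the block $a_0$ is the tridiagonal truncation of $A^k_{per}$ (with $0$'s on the diagonal, $1$'s on the superdiagonal, and $k_1, \dots, k_{m-1}$ on the subdiagonal); the block $a_1$ has the single nonzero entry $k_m$ in position $(1,m)$; and the block $a_{-1}$ has the single nonzero entry $1$ in position $(m,1)$. Consequently $a^k(\phi) - \lambda I_m$ is a tridiagonal matrix with $-\lambda$'s on the diagonal, except for the two ``periodic corner'' entries $k_m e^{i\phi}$ at $(1,m)$ and $e^{-i\phi}$ at $(m,1)$. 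In particular, $\sum_r \norm{a_r} < \infty$ trivially, so Proposition~\ref{thmD} applies.

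The key step is to exploit multilinearity of the determinant in rows $1$ and $m$, the only rows containing $\phi$-dependent entries. Splitting each of those two rows into (tridiagonal part) plus (phase part) decomposes $\det(a^k(\phi) - \lambda I_m)$ into four summands with $\phi$-dependence $1$, $e^{i\phi}$, $e^{-i\phi}$, and $e^{i\phi} \cdot e^{-i\phi} = 1$, respectively. The first summand is precisely the determinant of the tridiagonal matrix of size $m$ appearing in $p_k$. The summand carrying $e^{i\phi}$ reduces, by one cofactor expansion along the first row, to an upper triangular determinant equal to $\prod_{j=1}^{m-1} k_j$, and thereby contributes $(-1)^{m+1} \prod_{j=1}^m k_j$; the summand carrying $e^{-i\phi}$ similarly reduces to a lower triangular determinant equal to $1$, contributing $(-1)^{m+1}$. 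The fourth (mixed) summand, where both phase entries are chosen, reduces via two successive cofactor expansions to $-k_m$ times the tridiagonal determinant of size $m-2$ with subdiagonal $k_2, \dots, k_{m-2}$, which is precisely the second determinant in $p_k$. Collecting the four contributions and pulling out the common factor $(-1)^m$ yields the formula \eqref{eq3}.

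Once \eqref{eq3} is established, the remaining claims follow quickly. Both tridiagonal determinants in $p_k$ are polynomials in $\lambda$ of degrees $m$ and $m-2$, respectively, with leading coefficients $(-1)^m$ and $(-1)^{m-2} = (-1)^m$ (coming from the product of the $-\lambda$'s on the diagonal). Hence $(-1)^m$ times the first determinant contributes the leading term $\lambda^m$, while $(-1)^m k_m$ times the second is of lower order, so $p_k$ is monic of degree $m$. For the parity statement, let $D_n(\lambda)$ denote the tridiagonal determinant of size $n$ of the form in the lemma; expanding along the last column yields the three-term recurrence $D_n(\lambda) = -\lambda D_{n-1}(\lambda) - k_{n-1} D_{n-2}(\lambda)$ with $D_0 = 1$ and $D_1 = -\lambda$. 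A straightforward induction shows that $D_n$ is even when $n$ is even and odd when $n$ is odd; hence $D_m$ and $D_{m-2}$ share a parity, and so does $p_k$, agreeing with the parity of $m$.

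The main obstacle is careful sign bookkeeping through the four-way multilinear expansion and the nested cofactor expansions, in particular verifying that the two tridiagonal determinants emerging from the mixed term are exactly those in the definition of $p_k$ and that the coefficient $k_m$ (rather than some other $k_j$) appears in front. Working out the small cases $m = 2, 3, 4$ by hand first is a useful safeguard before committing to the general computation.
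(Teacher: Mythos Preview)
Your proof is correct and follows essentially the same route as the paper: both compute $\det(a^k(\phi)-\lambda I_m)$ by elementary determinant manipulations (Laplace/cofactor expansion) to isolate the $\phi$-independent tridiagonal pieces from the two corner contributions, and both handle the parity claim by the obvious three-term recurrence and induction. The only cosmetic difference is organizational: the paper expands along the first column and then sorts the resulting terms, whereas you first use multilinearity in rows $1$ and $m$ to split cleanly into four summands indexed by their $\phi$-dependence---this makes it slightly more transparent why no mixed $\lambda$-$\phi$ terms can arise, but the underlying computation is identical.
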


\begin{proof}
The symbol $a^k$ is given by
\[a^k(\phi) = \begin{pmatrix} 0 & 1 & & k_me^{i\phi} \\ k_1 & \ddots & \ddots & \\ & \ddots & \ddots & 1 \\e^{-i\phi} & & k_{m-1} & 0 \end{pmatrix}.\]
Using Laplace's formula, we get
\begin{align*}
\det(a^k(\phi) - \lambda I_m) &= -\lambda \det \begin{pmatrix} -\lambda & 1 & & \\ k_2 & \ddots & \ddots & \\ & \ddots & \ddots & 1 \\ & & k_{m-1} & -\lambda \end{pmatrix}\\
& \quad \, - k_1\left(\det \begin{pmatrix} -\lambda & 1 & & \\ k_3 & \ddots & \ddots & \\ & \ddots & \ddots & 1 \\ & & k_{m-1} & -\lambda \end{pmatrix} + (-1)^{m-2}k_me^{i\phi} \prod\limits_{j = 2}^{m-1} k_j\right)\\
& \quad \, + (-1)^{m-1}e^{-i\phi}\left(1 + (-1)^{m-2}k_me^{i\phi} \det \begin{pmatrix} -\lambda & 1 & & \\ k_2 & \ddots & \ddots & \\ & \ddots & \ddots & 1 \\ & & k_{m-2} & -\lambda \end{pmatrix}\right)\\
&= \det \begin{pmatrix} -\lambda & 1 & & \\ k_1 & \ddots & \ddots & \\ & \ddots & \ddots & 1 \\ & & k_{m-1} & -\lambda \end{pmatrix} - k_m\det \begin{pmatrix} -\lambda & 1 & & \\ k_2 & \ddots & \ddots & \\ & \ddots & \ddots & 1 \\ & & k_{m-2} & -\lambda \end{pmatrix}\\
& \quad \, + (-1)^{m-1}e^{i\phi}\prod\limits_{j = 1}^m k_j+ (-1)^{m-1}e^{-i\phi}.
\end{align*}
That $p_k$ is an even (odd) function if $m$ is even (odd) follows easily by induction over $m$. 
\end{proof}

The most important part of Lemma \ref{lemma1} is that there are no mixed terms of $\lambda$ and $\phi$ in equation \eqref{eq3}. This leads to the fact (see Corollary \ref{cor1} below) that the spectrum of every periodic operator $A^k_{per}$ can be written as the preimage of the interval $[-2,2]$ under some polynomial $p_k$. In this way the various parts of $\pi_\infty$ are connected. We will make great use of this fact in Theorem \ref{thm2}. But first observe the following. The term in \eqref{eq3} involving $\phi$ can be simplified to $-2\cos(\phi)$ or $2i\sin(\phi)$ depending on the product $\prod\limits_{j = 1}^m k_j$. To avoid unnecessary paperwork, we give the following definition.

\begin{defn} \label{defn1}
Let $m \in \N$ and $k \in \set{\pm 1}^m$. Then we call $k$ even if $\prod\limits_{j = 1}^m k_j = 1$ and odd if $\prod\limits_{j = 1}^m k_j = -1$.
\end{defn}

Note that we can always assume that a periodic operator $A^k_{per}$ has an even period $k$. This is because we can always double the period, i.e. take $(k_1, \ldots, k_m, k_1, \ldots, k_m) \in \set{\pm 1}^{2m}$ instead of $(k_1, \ldots, k_m) \in \set{\pm 1}^m$ as mentioned in the introduction.

Lemma \ref{lemma1} has the following important corollary.

\begin{cor} \label{cor1}
Let $m \in \N$ and $k \in \set{\pm 1}^m$. Then $\spec(A^k_{per}) = p_k^{-1}([-2,2])$ if $k$ is even and $\spec(A^k_{per}) = p_k^{-1}(i[-2,2])$ if $k$ is odd.
\end{cor}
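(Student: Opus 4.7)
The plan is to combine Proposition \ref{thmD} with the explicit formula \eqref{eq3} from Lemma \ref{lemma1}, and then to observe that the range of the $\phi$-dependent term on the right-hand side is exactly $[-2,2]$ or $i[-2,2]$ depending on the parity of $k$.

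First, by Proposition \ref{thmD} applied to $A = A^k_{per}$ (whose symbol matrix decays trivially, since only $a_0$, $a_{-1}$ and $a_1$ are nonzero), we have
\[
\spec(A^k_{per}) = \set{\lambda \in \C : \det(a^k(\phi) - \lambda I_m) = 0 \text{ for some } \phi \in [0,2\pi)}.
\]
Second, Lemma \ref{lemma1} says the determinant factors as
\[
\det(a^k(\phi) - \lambda I_m) = (-1)^m\!\left(p_k(\lambda) - e^{i\phi}\textstyle\prod_{j=1}^m k_j - e^{-i\phi}\right).
\]
So a $\lambda$ lies in $\spec(A^k_{per})$ if and only if there exists $\phi \in [0,2\pi)$ with $p_k(\lambda) = e^{i\phi}\prod_{j=1}^m k_j + e^{-i\phi}$.

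Next I would split into the two cases according to Definition \ref{defn1}. If $k$ is even, then $\prod_{j=1}^m k_j = 1$, so the $\phi$-dependent term equals $e^{i\phi} + e^{-i\phi} = 2\cos(\phi)$, whose range over $[0,2\pi)$ is exactly $[-2,2]$. Hence the condition becomes $p_k(\lambda) \in [-2,2]$, i.e.\ $\lambda \in p_k^{-1}([-2,2])$. If $k$ is odd, then $\prod_{j=1}^m k_j = -1$, so the term equals $-e^{i\phi} + e^{-i\phi} = -2i\sin(\phi)$, whose range is $\set{-2it : t \in [-1,1]} = i[-2,2]$. Hence the condition becomes $p_k(\lambda) \in i[-2,2]$, i.e.\ $\lambda \in p_k^{-1}(i[-2,2])$. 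Both inclusions go both ways by construction, giving equality of sets.

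There is no real obstacle here; the whole statement is essentially a packaging of the identity \eqref{eq3}. The only small point worth checking is the elementary claim that $\set{e^{i\phi} + \varepsilon e^{-i\phi} : \phi \in [0,2\pi)}$ equals $[-2,2]$ for $\varepsilon = 1$ and $i[-2,2]$ for $\varepsilon = -1$, which is immediate from Euler's formula.
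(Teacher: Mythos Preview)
Your proof is correct and follows essentially the same route as the paper: apply Proposition~\ref{thmD} to reduce to the determinant condition, use the factorization \eqref{eq3} from Lemma~\ref{lemma1}, and identify the range of the $\phi$-dependent term as $[-2,2]$ or $i[-2,2]$ according to the parity of $k$. The only difference is that you spell out slightly more detail (the trivial decay condition and the range computation via Euler's formula), but the argument is the same.
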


\begin{proof}
Let $k$ be even first. By Proposition \ref{thmD} and Lemma \ref{lemma1} we have
\begin{align*}
\spec(A^k_{per}) &= \set{\lambda \in \C : \det(a(\phi) - \lambda I_m) = 0 \text{ for some } \phi \in [0,2\pi)}\\
&= \set{\lambda \in \C : p_k(\lambda) - 2\cos(\phi) = 0 \text{ for some } \phi \in [0,2\pi)}\\
&= \set{\lambda \in \C : p_k(\lambda) \in [-2,2]}\\
&= p_k^{-1}([-2,2]).
\end{align*}
If $k$ is odd, just replace $-2\cos(\phi)$ by $2i\sin(\phi)$.
\end{proof}

The next proposition not only shows that $\spec(p_k(A^k_{per})) = [-2,2]$ ($i[-2,2]$) for all even (odd) $k \in \set{\pm 1}^m$ but also that $p_k(A^k_{per})$ has a very simple form.

\begin{prop} \label{prop1}
Let $m \in \N$, $k \in \set{\pm 1}^m$ and let $p_k$ be the corresponding polynomial given by Lemma \ref{lemma1}. If $k$ is even, then
\[p_k(a^k(\phi)) = 2\cos(\phi)I_m\]
and $p_k(A^k_{per})$ is the Laurent operator with $1$ on its $m$-th sub- and superdiagonal (and $0$ everywhere else):
\begin{equation} \label{eq4}
p_k(A^k_{per}) = \begin{pmatrix} & & \ddots & & & & \\ & & & 1 & & & \\ \ddots & & & & 1 & & \\ & 1 & & & & 1 & \\ & & 1 & & & & \ddots \\ & & & 1 & & & \\ & & & & \ddots & & \end{pmatrix}.
\end{equation}
If $k$ is odd, then
\[p_k(a^k(\phi)) = -2i\sin(\phi)I_m\]
and $p_k(A^k_{per})$ is the Laurent operator with $-1$ on its $m$-th subdiagonal and $1$ on its $m$-th superdiagonal (and $0$ everywhere else):
\begin{equation} \label{eq5}
p_k(A^k_{per}) = \begin{pmatrix} & & \ddots & & & & \\ & & & 1 & & & \\ \ddots & & & & 1 & & \\ & -1 & & & & 1 & \\ & & -1 & & & & \ddots \\ & & & -1 & & & \\ & & & & \ddots & & \end{pmatrix}.
\end{equation}
\end{prop}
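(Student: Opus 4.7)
The plan is to exploit Cayley--Hamilton fibrewise in $\phi$ and then transport the resulting identity through the unitary equivalence of Proposition~\ref{thmD}.

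First I would establish the pointwise statement $p_k(a^k(\phi)) = 2\cos(\phi)\,I_m$ (resp.\ $-2i\sin(\phi)\,I_m$). Lemma~\ref{lemma1} gives an explicit formula for $\det(a^k(\phi)-\lambda I_m)$, so the characteristic polynomial of the $m\times m$ matrix $a^k(\phi)$ is
\[\chi_\phi(\lambda)=\det(\lambda I_m - a^k(\phi))=(-1)^m\det(a^k(\phi)-\lambda I_m)=p_k(\lambda)-e^{i\phi}\prod_{j=1}^m k_j-e^{-i\phi}.\]
In the even case $\prod k_j=1$, so $\chi_\phi(\lambda)=p_k(\lambda)-2\cos(\phi)$; in the odd case $\prod k_j=-1$, so $\chi_\phi(\lambda)=p_k(\lambda)+2i\sin(\phi)$ (using $e^{i\phi}-e^{-i\phi}=2i\sin\phi$). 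Cayley--Hamilton applied to $a^k(\phi)$ then yields the two claimed identities $p_k(a^k(\phi))=2\cos(\phi)I_m$ and $p_k(a^k(\phi))=-2i\sin(\phi)I_m$, respectively.

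Second, I would transfer this to $A^k_{per}$. By Proposition~\ref{thmD}, $A^k_{per}$ is unitarily equivalent to the multiplication operator $M_{a^k}$ on $L^2([0,2\pi),\C^m)$. Since polynomial functional calculus commutes with unitary equivalence and satisfies $p_k(M_{a^k})=M_{p_k\circ a^k}$ (by multiplicativity of $M$ on powers and linearity), $p_k(A^k_{per})$ is unitarily equivalent to the multiplication operator with symbol $p_k(a^k(\phi))$. By the previous step this symbol equals $2\cos(\phi)I_m$ or $-2i\sin(\phi)I_m$, both of which are $m$-periodic operators themselves, so Proposition~\ref{thmD} applies back to identify the $\ell^2(\Z)$ representation.

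Finally I would read off the Laurent structure from the Fourier coefficients. Writing $2\cos(\phi)I_m = e^{i\phi}I_m + e^{-i\phi}I_m$ (even case) or $-2i\sin(\phi)I_m = -e^{i\phi}I_m + e^{-i\phi}I_m$ (odd case), only the blocks $a_{\pm 1}$ are nonzero and each is $\pm I_m$. Unpacking the formula $a_r=(A_{rm+i,j})_{i,j=1}^m$ from Proposition~\ref{thmD}, $a_{1}=\epsilon I_m$ means $A_{m+i,i}=\epsilon$ for $i=1,\dots,m$, i.e.\ the value $\epsilon$ sits on the $m$-th subdiagonal; symmetrically $a_{-1}=\epsilon' I_m$ places $\epsilon'$ on the $m$-th superdiagonal. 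This yields precisely \eqref{eq4} and \eqref{eq5}.

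There is no serious obstacle: the only point requiring care is tracking the sign in the odd case, where the factor $(-1)^m$ in Lemma~\ref{lemma1} combined with $\prod k_j=-1$ flips $2i\sin\phi$ to $-2i\sin\phi$ and, upon expansion as $e^{-i\phi}-e^{i\phi}$, produces the asymmetric $(-1,+1)$ pattern rather than a symmetric one.
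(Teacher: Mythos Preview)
Your proof is correct and follows essentially the same route as the paper: apply Cayley--Hamilton to the symbol $a^k(\phi)$ using the characteristic polynomial computed in Lemma~\ref{lemma1}, and then identify $p_k(A^k_{per})$ via Proposition~\ref{thmD} by recognising $2\cos(\phi)I_m=(e^{i\phi}+e^{-i\phi})I_m$ and $-2i\sin(\phi)I_m=(-e^{i\phi}+e^{-i\phi})I_m$ as the symbols of the two Laurent operators. Your version is simply a more detailed write-up of the paper's terse proof.
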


\begin{proof}
In both cases the first part follows immediately from Lemma \ref{lemma1} and the theorem of Cayley-Hamilton. For the second part observe that $2\cos(\phi)I_m = (e^{i\phi} + e^{-i\phi})I_m$ is the symbol of \eqref{eq4} and $-2i\sin(\phi)I_m = (-e^{i\phi} + e^{-i\phi})I_m$ is the symbol of \eqref{eq5}. The assertion thus follows by Proposition \ref{thmD}.
\end{proof}

This simple observation now enables us to prove the first of our main results.

\begin{thm} \label{thm1}
Let $m \geq 2$, $k := (k_1, \ldots, k_{m-2}, -1, 1) \in \set{\pm 1}^m$ and $\hat{k} := (k_1, \ldots, k_{m-2}, 1, -1) \in \set{\pm 1}^m$. Furthermore, let $b \in \set{\pm 1}^{\Z}$ and
\[A^b := \begin{pmatrix} \ddots & \ddots & & & \\ \ddots & 0 & 1 & & \\ & b_0 & \fbox{$0$} & 1 & \\ & & b_1 & 0 & \ddots \\ & & & \ddots & \ddots \end{pmatrix},\]
where the box indicates $A^b_{0,0}$. If the corresponding polynomials $p_k$ and $p_{\hat{k}}$ are equal, then there exist $c \in \set{\pm 1}^{\Z}$ and $B \in \Lc(\ell^2(\Z))$ such that 
\[p_k(A^c) \cong B \oplus A^b,\]
where we consider the following decomposition of the Hilbert space $\ell^2(\Z)$:
\[\ell^2(\Z) \cong \ell^2(\Z \setminus m\Z) \oplus \ell^2(m\Z) \cong \ell^2(\Z) \oplus \ell^2(\Z).\]
In particular, $\spec(A^b) \subset \spec(p_k(A^c))$.
\end{thm}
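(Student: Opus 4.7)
The plan is to build $c \in \set{\pm 1}^{\Z}$ blockwise from only the two patterns $k$ and $\hat k$, arranged so that the two positions where $k$ and $\hat k$ disagree straddle each multiple of $m$. Concretely I would reserve ``block $j$'' as the positions $(j-1)m+2, \ldots, jm+1$ and place the pattern $k$ or $\hat k$ there according to a sign $\tau_j \in \set{\pm 1}$, so that $c = c(\tau)$ depends on a free parameter $\tau \in \set{\pm 1}^{\Z}$ that will be tuned at the end to realize the prescribed $b$.

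The key structural input is Proposition~\ref{prop1} together with the hypothesis $p_k = p_{\hat k}$: it says that $p_k(A^k_{per}) = p_k(A^{\hat k}_{per})$ is the Laurent operator from \eqref{eq4} or \eqref{eq5}, supported only on the $m$-th sub- and superdiagonal. Since $p_k$ has degree $m$ and $A^c$ is tridiagonal, $p_k(A^c)$ has bandwidth at most $m$, so $p_k(A^c)e_i \in \spann\set{e_{i-m},\ldots,e_{i+m}}$. Whenever a full $m$-neighborhood of $i$ in $c$ lies inside a single block, the matrix elements of $p_k(A^c)$ on this window coincide with those of the Laurent operator (whether one thinks of the block as $k$- or $\hat k$-periodic), so $p_k(A^c)e_i$ is supported only on $e_{i \pm m}$ and the invariance of both $\ell^2(m\Z)$ and $\ell^2(\Z \setminus m\Z)$ is automatic for such $i$. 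The real work is at the boundary indices $i = jm$: one expands $p_k(A^c)e_{jm}$ in terms of the $c$-values in the window $[jm - m + 1, jm + m]$ (which depend only on $\tau_{j-1}, \tau_j, \tau_{j+1}$) and shows that every coefficient of $e_{jm + r}$ with $0 < |r| < m$ vanishes, while the $e_{jm - m}$ coefficient is inherited from the Laurent operator (namely $1$) and the $e_{jm + m}$ coefficient reads out as a simple $\pm$-product of the $\tau$s.

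With invariance in hand, the restriction of $p_k(A^c)$ to $\ell^2(m\Z)$, identified with $\ell^2(\Z)$ via $e_{jm} \leftrightarrow f_j$, is tridiagonal with superdiagonal identically $1$ and subdiagonal entry at position $j$ of the form $\pm \tau_{j-1}\tau_j$. This equals the prescribed $b_j$ exactly when $\tau$ satisfies the one-step recursion $\tau_j = \pm b_j \tau_{j-1}$, which has a solution in $\set{\pm 1}^{\Z}$ for any choice of $\tau_0 \in \set{\pm 1}$. This determines $c = c(\tau)$, and setting $B := p_k(A^c)|_{\ell^2(\Z \setminus m\Z)}$ yields the decomposition $p_k(A^c) \cong B \oplus A^b$; the spectral inclusion $\spec(A^b) \subset \spec(p_k(A^c))$ is then immediate from the direct-sum structure.

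The main obstacle is the boundary calculation --- showing that the $m-1$ ``off-decomposition'' coefficients of $p_k(A^c)e_{jm}$ all cancel. I expect this is precisely where the hypothesis $p_k = p_{\hat k}$ is used in an essential way: it forces both virtual periodic extensions of $c$ through the boundary (as $k$-periodic or as $\hat k$-periodic) to produce the same Laurent operator under $p_k$, so that a careful bookkeeping of walks in the expansion of $p_k(A^c)e_{jm}$ collapses the boundary contributions via a cancellation between walks following block $j$'s pattern and walks following block $j+1$'s. Tracking the signs --- coming from the parity of $k$ and from the $(-1,1)$ versus $(1,-1)$ swap at the block ends --- is likely to be the most delicate part.
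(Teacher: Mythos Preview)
Your construction of $c$ and the overall strategy---place copies of $k$ or $\hat k$ in blocks, use Proposition~\ref{prop1} together with $p_k=p_{\hat k}$ to compare with the periodic Laurent operator, then read off $A^b$ on $\ell^2(m\Z)$---is exactly the paper's approach. The identification of the super- and subdiagonal entries on $\ell^2(m\Z)$ and the recursive choice of $\tau$ are also right.

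The gap is in the ``boundary calculation''. Your sentence ``whenever a full $m$-neighborhood of $i$ in $c$ lies inside a single block'' is vacuous: blocks have length $m$, so no index has this property. More importantly, when you look at the whole column $p_k(A^c)e_{jm}$, the relevant $c$-window $[(j-1)m+1,(j+1)m]$ touches \emph{three} swap pairs (at $(j-1)m$, $jm$, $(j+1)m$), so a direct walk-cancellation between ``block $j$'' and ``block $j+1$'' patterns is not well posed as stated. The paper avoids this entirely by proving a sharper locality lemma (its Claim~1): by a one-line induction on $s$, each individual entry $((A^c)^s)_{i,i-s+2j}$ depends only on the $s$ consecutive values $c_{i-s+j+1},\ldots,c_{i+j}$. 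Taking $s=m$ and $i=rm$, the window for each off-decomposition entry in row $rm$ (and, by the symmetric argument, in column $rm$) has length exactly $m$ and therefore contains the swap pair $c_{rm},c_{rm+1}$ but \emph{no other} swap. Hence that entry equals the corresponding entry of $p_k(A^k_{per})$ or of $p_{\hat k}(A^{\hat k}_{per})$, and both vanish by Proposition~\ref{prop1} and $p_k=p_{\hat k}$. This single observation replaces your anticipated ``delicate'' bookkeeping; once you have it, the rest of your outline goes through verbatim (remember to treat rows as well as columns to get the full block-diagonal form).
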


\begin{proof}
Let $c \in \set{\pm 1}^{\Z}$ be the sequence defined by:
\begin{itemize}
	\item $c_0 = 1$, $c_1 = -1$,
	\item $c_{rm+j} = k_{j-1}$ for $j \in \set{2, \ldots, m-1}$, $r \in \Z$,
	\item $\prod\limits_{j = 0}^{m-1} c_{rm-j} = b_r$ for $r \in \Z$,
	\item $c_{rm+1} = -c_{rm}$ for $r \in \Z$.
\end{itemize}
Note that $A^c$ is very similar to $A^k_{per}$ and $A^{\hat{k}}_{per}$. The difference is that, depending on the sequence $b$, the entries $c_{rm}$ and $c_{rm+1}$ are swapped for some $r \in \Z$. For $m = 2$ this is exactly the same construction as in \cite[Lemma 5]{ChaDa}. First we will prove the following claim by induction:

\medskip

\noindent \textbf{Claim 1:} $((A^c)^s)_{i,i-s+2j}$ only depends on the coefficients $c_{i-s+j+1}, \ldots, c_{i+j}$ for $s \in \N$, $i \in \Z$ and $j \in \set{0, \ldots, s-1}$. Furthermore, $((A^c)^s)_{i,i+s} = 1$ for all $s \in \N$, $i \in \Z$ and all other entries are $0$.

\medskip

\noindent For $s = 1$ we have $(A^c)_{i,i-1+2j} = c_{i+j}$ for $j = 0$, $i \in \Z$, and $(A^c)_{i,i+1} = 1$ for $i \in \Z$. All other entries are $0$. So assume that the claim holds for $s-1$. Then
\begin{align} \label{eq7}
((A^c)^s)_{i,i-s+2j} &= (A^c)_{i,i+1}((A^c)^{s-1})_{i+1,i-s+2j} + (A^c)_{i,i-1}((A^c)^{s-1})_{i-1,i-s+2j}\notag\\
&= ((A^c)^{s-1})_{i+1,i-s+2j} + c_i((A^c)^{s-1})_{i-1,i-s+2j}\\
&= f(c_{i-s+j+2}, \dots, c_{i+j}) + c_ig(c_{i-s+j+1}, \ldots, c_{i+j-1}),\notag
\end{align}
where $f$ and $g$ are some polynomials. Observe that $c_i$ is contained in $\set{c_{i-s+j+1}, \ldots, c_{i+j}}$ for all $i \in \Z$, $j \in \set{0, \ldots, s-1}$. Thus $((A^c)^s)_{i,i-s+2j}$ only depends on the coefficients $c_{i-s+j+1}, \ldots, c_{i+j}$ for all $i \in \Z$, $j \in \set{0, \ldots, s-1}$. Plugging $j = s$ into \eqref{eq7} yields $((A^c)^s)_{i,i+s} = 1$ for all $i \in \Z$ by the same induction argument. Plugging in $j \in \Z+\frac{1}{2}$ and $j \in \Z \setminus \set{0, \ldots, s}$ shows that all other entries are $0$. This finishes the proof of the claim.

\medskip

Using Claim 1 with $s = m$ and $i = rm$ for $r \in \Z$, we get that $((A^c)^m)_{rm,(r-1)m+2j}$ only depends on the coefficients $c_{(r-1)m+j+1}, \ldots, c_{rm+j}$ for all $j \in \set{0, \ldots, m-1}$, $((A^c)^m)_{rm,rm+m} = 1$ and all other entries in row $rm$ are $0$. Similarly, $((A^c)^m)_{(r-1)m+2j,rm}$ only depends on the coeffcients $c_{(r-1)m+j+1}, \ldots, c_{rm+j}$ for all $j \in \set{0, \ldots, m-1}$, $((A^c)^m)_{rm+m,rm} = 1$ and all other entries in column $rm$ are $0$. Moreover, Claim 1 also implies that the same is true for $p_k(A^c)$ because $p_k$ is an even/odd monic polynomial of degree $m$ by Lemma \ref{lemma1}.

\medskip

\noindent \textbf{Claim 2:} $((A^c)^m)_{i,i-m} = \prod\limits_{j = 0}^{m-1} c_{i-j}$ for all $i \in \Z$.

\medskip

\noindent This again follows easily by induction:
\begin{align*}
((A^c)^m)_{i,i-m} &= (A^c)_{i,i-1}((A^c)^{m-1})_{i-1,i-m} + (A^c)_{i,i+1}((A^c)^{m-1})_{i+1,i-m}\\
&= c_i \cdot ((A^c)^{m-1})_{i-1,i-m} + 1 \cdot 0\\
&= c_i \cdot \ldots \cdot c_{i-(m-1)}.
\end{align*}
Since $((A^c)^s)_{i,i-m} = 0$ for all $s < m$, also $(p_k(A^c))_{i,i-m} = \prod\limits_{j = 0}^{m-1} c_{i-j}$ for all $i \in \Z$. By definition of $c$, it thus follows
\[(p_k(A^c))_{rm,(r-1)m} = \prod\limits_{j = 0}^{m-1} c_{rm-j} = b_r\]
for all $r \in \Z$. Furthermore, $(p_k(A^c))_{rm,(r-1)m+2j}$ only depends on the coefficients $c_{(r-1)m+j+1}, \ldots$, $c_{rm+j}$ for $j \in \set{1, \ldots, m-1}$. In particular, these numbers all depend on $c_{rm}$ and $c_{rm+1}$ but not on $c_{(r-1)m}$, $c_{(r-1)m+1}$, $c_{(r+1)m}$ or $c_{(r+1)m+1}$. This implies
\[(p_k(A^c))_{rm,(r-1)m+2j} = (p_k(A^k_{per}))_{rm,(r-1)m+2j} = 0 \quad (\text{if } c_{rm} = -1, c_{rm+1} = 1)\]
or (using $p_k = p_{\hat{k}}$)
\[(p_k(A^c))_{rm,(r-1)m+2j} = (p_{\hat{k}}(A^{\hat{k}}_{per}))_{rm,(r-1)m+2j} = 0 \quad (\text{if } c_{rm} = 1, c_{rm+1} = -1)\]
for $j \in \set{1, \ldots, m-1}$ by Proposition \ref{prop1}. In other words, the entries $(p_k(A^c))_{rm,(r-1)m+2j}$ ($j \in \set{1, \ldots, m-1}$) can not ``know'' whether we swapped some of the entries $c_{lm}$ and $c_{lm+1}$ ($l \in \Z$) or not. Thus they have to remain zero. Similarly, the entries $(p_k(A^c))_{(r-1)m+2j,rm}$ ($j \in \set{1, \ldots, m-1}$) remain $0$. Therefore $p_k(A^c)$ looks like this (where $*$ means ``some unimportant entries''):
\[p_k(A^c) = \left(\begin{array}{cc>{\columncolor[rgb]{.75,.75,.75}[4pt]}cccc>{\columncolor[rgb]{.75,.75,.75}[0pt]}cccc>{\columncolor[rgb]{.75,.75,.75}[6pt]}ccc} & \vdots & \vdots & \vdots & \ddots & & & & & & & & \\ \hdots & * & \vdots & * & \hdots & * & & & & & & & \\ \rowcolor[rgb]{.75,.75,.75}[5pt][5pt] \hdots & \hdots & 0 & \hdots & \hdots & 0 & 1 & & & & & & \\ \hdots & * & \vdots & * & \hdots & * & 0 & * & & & & & \\ \ddots & \vdots & \vdots & \vdots & & \vdots & \vdots & \vdots & \ddots & & & & \\ & * & 0 & * & \hdots & * & \vdots & * & \hdots & * & & & \\ \rowcolor[rgb]{.75,.75,.75}[5pt][5pt] & & b_r & 0 & \hdots & \hdots & 0 & \hdots & \hdots & 0 & 1 & & \\ & & & * & \hdots & * & \vdots & * & \hdots & * & 0 & * & \\ & & & & \ddots & \vdots & \vdots & \vdots & & \vdots & \vdots & \vdots & \ddots \\ & & & & & * & 0 & * & \hdots & * & \vdots & * & \hdots \\ \rowcolor[rgb]{.75,.75,.75}[5pt][5pt] & & & & & & b_{r+1} & 0 & \hdots & \hdots & 0 & \hdots & \hdots \\ & & & \multispan{3}\mathstrut\upbracefill \, & & * & \hdots & * & \vdots & * & \hdots \\ & & & \multicolumn{3}{c}{m-1 \text{ columns}} & & & \ddots & \vdots & \vdots & \vdots & \end{array}\right).\]

Decomposing our Hilbert space $\ell^2(\Z) \cong \ell^2(\Z \setminus m\Z) \oplus \ell^2(m\Z) \cong \ell^2(\Z) \oplus \ell^2(\Z)$, we get the following decomposition of $p_k(A^c)$:
\[p_k(A^c) \cong B \oplus A^b\]
for some $B \in \Lc(\ell^2(\Z))$. In particular, $\spec(A^b) \subset \spec(p_k(A^c))$.
\end{proof}

By construction of the sequence $c$, we also have the following important corollary for periodic operators.

\begin{cor} \label{cor2}
Under the same assumptions as in Theorem \ref{thm1}, we have that if $b \in \set{\pm 1}^{\Z}$ is an $n$-periodic sequence with even period $(b_1, \ldots, b_n)$, then $c$, as defined in the proof of Theorem \ref{thm1}, is an $nm$-periodic sequence with even period $(c_1, \ldots, c_{nm})$. $B \in \Lc(\ell^2(\Z))$ then is a periodic operator, too. Furthermore, if we denote the symbols of $A^b$, $A^c$ and $B$ by $a^b$, $a^c$ and $a^B$, then also $p_k(a^c)$ can be decomposed as $p_k(a^c) \cong a^B \oplus a^b$. In particular, $\spec(a^b(\phi)) \subset \spec(p_k(a^c(\phi)))$ for every $\phi \in [0,2\pi)$.
\end{cor}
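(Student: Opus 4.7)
The plan is to translate the construction of Theorem \ref{thm1} into the symbol language of Proposition \ref{thmD}. Essentially everything is equivariant under translation by $nm$, so the unitary equivalence of Theorem \ref{thm1} should descend fibrewise to the symbols with no new work beyond bookkeeping.

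First I would verify the periodicity claims about $c$. The four defining rules of $c$ from the proof of Theorem \ref{thm1} depend on $r$ only through $r \bmod n$ (since $b$ is $n$-periodic) or not at all, so $nm$-periodicity is automatic. The third rule $\prod_{j=0}^{m-1} c_{rm-j} = b_r$ then gives
\[\prod_{j=1}^{nm} c_j = \prod_{r=1}^{n} \prod_{j=(r-1)m+1}^{rm} c_j = \prod_{r=1}^n b_r = 1,\]
because $(b_1, \ldots, b_n)$ is assumed even; hence $(c_1, \ldots, c_{nm})$ is even as well and Lemma \ref{lemma1} and Corollary \ref{cor1} apply to $A^c$.

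Next, both $m\Z$ and $\Z \setminus m\Z$ are invariant under translation by $nm$, so the orthogonal decomposition $\ell^2(\Z) \cong \ell^2(\Z \setminus m\Z) \oplus \ell^2(m\Z)$ commutes with the translation-by-$nm$ action used to build the symbol via Proposition \ref{thmD}. Therefore the unitary realization of Proposition \ref{thmD} splits as
\[L^2([0,2\pi),\C^{nm}) \cong L^2([0,2\pi),\C^{n(m-1)}) \oplus L^2([0,2\pi),\C^n),\]
and the operator identity $p_k(A^c) \cong B \oplus A^b$ from Theorem \ref{thm1} lifts to a fibrewise identity $p_k(a^c(\phi)) \cong a^B(\phi) \oplus a^b(\phi)$, with $B$ necessarily periodic. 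To identify the $n$-dimensional summand with $a^b(\phi)$, I would note that restricting the DFT of $\ell^2(\Z)$ to $\ell^2(m\Z)$ and relabeling $e_{rm} \mapsto e_r$ gives precisely the $n$-periodic DFT on $\ell^2(\Z)$, because translation by $nm$ on $\ell^2(m\Z)$ corresponds to translation by $n$ on $\ell^2(\Z)$ under the relabeling. The spectral inclusion $\spec(a^b(\phi)) \subset \spec(p_k(a^c(\phi)))$ is then immediate from the direct-sum structure.

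The main obstacle is the bookkeeping for the $\Z \setminus m\Z$ summand, namely describing $B$ concretely enough to see that it is periodic (it will be $n(m-1)$-periodic after the obvious relabeling) and to write down its symbol $a^B$. However, once one accepts that the decomposition of $\ell^2(\Z)$ is compatible with translation by $nm$, periodicity of $B$ and the existence of $a^B$ are automatic, so no computation beyond that already carried out in the proof of Theorem \ref{thm1} is required.
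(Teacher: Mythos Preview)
Your argument is correct and follows essentially the same route as the paper: both verify the periodicity of $c$ directly from its defining rules, and both obtain the symbol decomposition by observing that the splitting $\ell^2(\Z)\cong\ell^2(\Z\setminus m\Z)\oplus\ell^2(m\Z)$ is compatible with the Fourier diagonalisation of Proposition~\ref{thmD} (the paper encodes this as the identity $(\Fc_{(n-1)m}\oplus\Fc_m)U\Fc_{nm}^*=V$, which is exactly your translation-equivariance statement). Incidentally, your fibre dimensions $n(m-1)$ and $n$ are the correct ones; the paper's $(n-1)m$ and $m$ appear to be a typographical slip.
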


\begin{proof}
The first part follows by contruction of $c$. It remains to prove that the symbol $p_k(a^c)$ can be decomposed in a similar way. By Proposition \ref{thmD}, $A^c$ is unitarily equivalent to the multiplication operator $M_{a^c} \in \Lc(L^2([0,2\pi),\C^{nm}))$. Let us denote this equivalence by $\Fc_{nm}$, i.e. $A^c = \Fc_{nm}^*M_{a^c}\Fc_{nm}$. It follows 
\[p_k(A^c) = p_k(\Fc_{nm}^*M_{a^c}\Fc_{nm}) = \Fc_{nm}^*p_k(M_{a^c})\Fc_{nm} = \Fc_{nm}^*M_{p_k(a^c)}\Fc_{nm}.\]
Furthermore, $A^b$ is unitarily equivalent to the multiplication operator $M_{a^b} \in \Lc(L^2([0,2\pi),\C^{m}))$ and $B$ is unitarily equivalent to the multiplication operator $M_{a^B} \in \Lc(L^2([0,2\pi),\C^{(n-1)m}))$. Let us denote these equivalences by $\Fc_m$ and $\Fc_{(n-1)m}$. Furthermore, let us denote the decomposition
$\ell^2(\Z) \cong \ell^2(\Z \setminus m\Z) \oplus \ell^2(m\Z)$ by $U$ and the decomposition $L^2([0,2\pi),\C^{nm}) \cong L^2([0,2\pi),\C^{(n-1)m}) \oplus L^2([0,2\pi),\C^m)$ (in the obvious way) by $V$. It is not hard to see that
\[(\Fc_{(n-1)m} \oplus \Fc_m)U\Fc_{nm}^* = V\]
holds. Thus
\begin{align*}
M_{p_k(a^c)} &= \Fc_{nm}p_k(A^c)\Fc_{nm}^*\\
&= \Fc_{nm}U^*(B \oplus A^b)U\Fc_{nm}^*\\
&= \Fc_{nm}U^*(\Fc_{(n-1)m}^*M_{a^B}\Fc_{(n-1)m} \oplus \Fc_m^*M_{a^b}\Fc_m )U\Fc_{nm}^*\\
&= V^*(M_{a^B} \oplus M_{a^b})V.
\end{align*}
This implies $p_k(a^c) \cong a^B \oplus a^b$ (as functions of $\phi$).
\end{proof}

\begin{defn} \label{defn2}
We define
\[S := \set{p_k : p_k \text{ is a polynomial such that the assumptions of Theorem \ref{thm1} are satisfied}}\]
as our set of symmetries of $\pi_\infty$.
\end{defn}

In the case of periodic operators we can prove the following stronger version of Theorem \ref{thm1} that justifies the definition of $S$.

\begin{figure}[ht!]
\begin{center}
\includegraphics[width = \textwidth, trim = 15mm 20mm 10mm 10mm, clip = true]{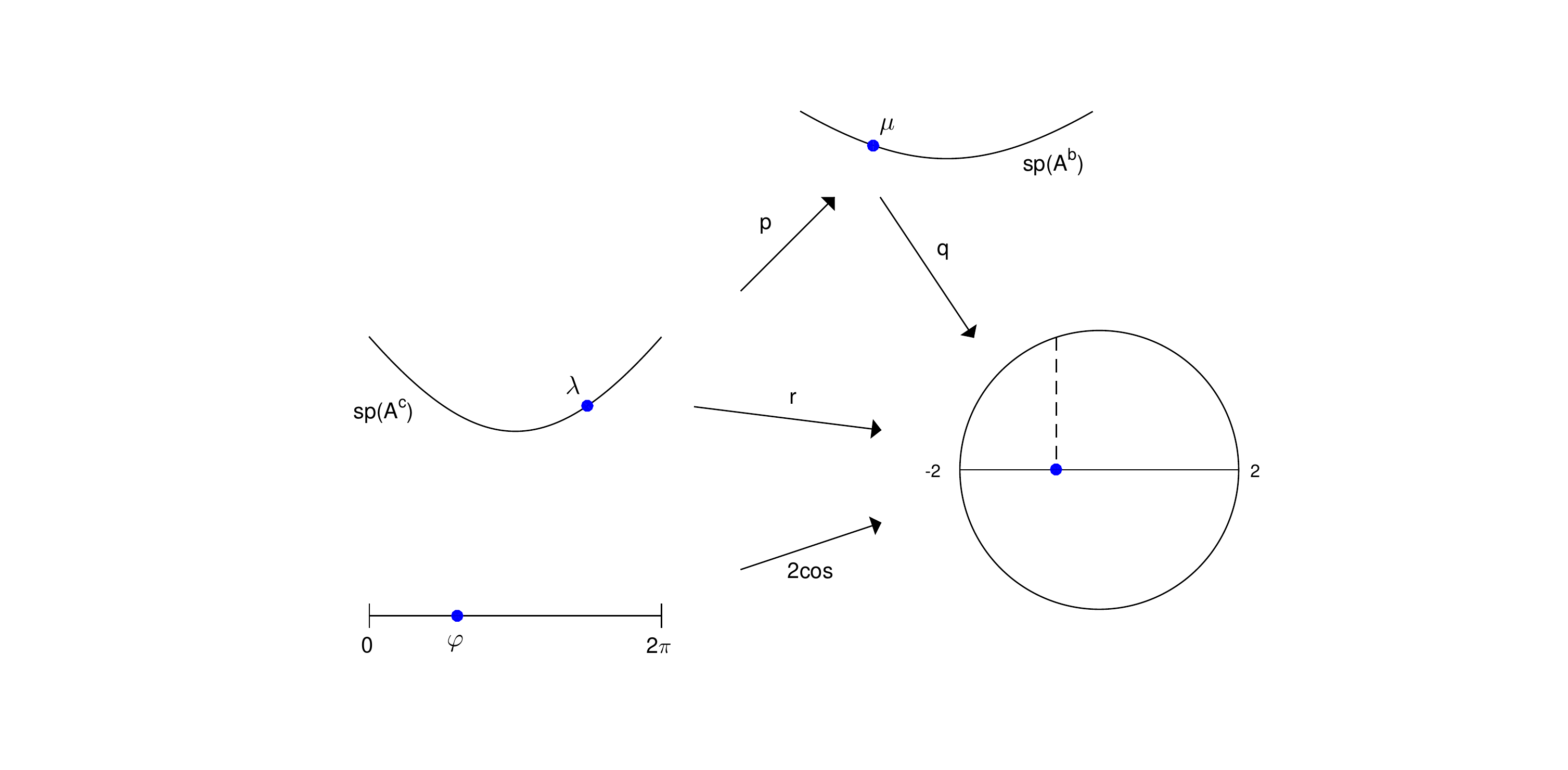}
\end{center}

\caption{Schematic picture of the maps involved in the proof of Theorem \ref{thm2}.}
\end{figure}

\begin{thm} \label{thm2}
Let $n \in \N$, let $b \in \set{\pm 1}^{\Z}$ be an $n$-periodic sequence and let $k \in \set{\pm 1}^m$ be such that $p := p_k \in S$. Moreover, let $c \in \set{\pm 1}^{\Z}$ be the sequence as constructed in the proof of Theorem \ref{thm1}. Then the following assertion holds:
\[p(\lambda) \in \spec(A^b) \Longleftrightarrow \lambda \in \spec(A^c).\]
\end{thm}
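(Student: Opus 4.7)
My plan is to reduce the bi-implication to the polynomial identity $p_b\circ p_k=p_c$, and then to establish that identity via Corollary \ref{cor2} together with a generic-eigenvector argument. By the doubling trick noted after Definition \ref{defn1}, replacing the period $n$ of $b$ by $2n$ if necessary, I may assume $b$ has even period and that the sequence $c$ produced by the construction of Theorem \ref{thm1} is $nm$-periodic and even; indeed the construction gives $\prod_{j=1}^{nm}c_j=\prod_{r=1}^n b_r=1$. Corollary \ref{cor1} then yields $\spec(A^b)=p_b^{-1}([-2,2])$ and $\spec(A^c)=p_c^{-1}([-2,2])$, and the identity $p_b\circ p_k=p_c$ produces the chain
\[p(\lambda)\in\spec(A^b)\iff p_b(p(\lambda))\in[-2,2]\iff p_c(\lambda)\in[-2,2]\iff\lambda\in\spec(A^c),\]
which is the statement of the theorem.

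To prove $p_b\circ p_k=p_c$, observe that both sides are monic of degree $nm$, so their difference has degree at most $nm-1$, and it suffices to exhibit infinitely many common zeros. Fix $\phi\in[0,2\pi)$ and let $v\in\C^{nm}$ be an eigenvector of $a^c(\phi)$ with eigenvalue $\lambda$. By Lemma \ref{lemma1}, $\lambda$ is a root of $p_c(\cdot)-2\cos\phi$, so $p_c(\lambda)=2\cos\phi$. By Corollary \ref{cor2} there is a unitary $U$ with $Up_k(a^c(\phi))U^*=a^B(\phi)\oplus a^b(\phi)$; writing $Uv=u_B\oplus u_b$ in the induced splitting $\C^{n(m-1)}\oplus\C^n$, the identity $p_k(a^c(\phi))v=p_k(\lambda)v$ projects to $a^b(\phi)u_b=p_k(\lambda)u_b$. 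Whenever $u_b\ne 0$, this forces $p_k(\lambda)\in\spec(a^b(\phi))$, and Lemma \ref{lemma1} applied to $a^b(\phi)$ then gives $p_b(p_k(\lambda))=2\cos\phi=p_c(\lambda)$, so $\lambda$ is a common zero of $p_b\circ p_k-p_c$.

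To produce infinitely many such $\lambda$, I would argue by a dimension count. For $\phi$ outside the discriminant locus of $a^c(\cdot)$, the matrix $a^c(\phi)$ is diagonalisable with $nm$ simple eigenvalues and admits a full basis of eigenvectors $v_1,\ldots,v_{nm}$. The projection $Uv\mapsto u_b$ from $\C^{nm}$ onto $\C^n$ is surjective, so the images $u_{1,b},\ldots,u_{nm,b}$ span $\C^n$ and in particular at least $n$ of them are non-zero, yielding $n$ eigenvalues of $a^c(\phi)$ on which $p_b\circ p_k$ and $p_c$ coincide. Letting $\phi$ range over the (dense) complement of the discriminant locus in $[0,2\pi)$ then produces uncountably many distinct common zeros, forcing $p_b\circ p_k-p_c\equiv 0$. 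The main obstacle is the genericity step: one must verify that the discriminant of $a^c(\phi)$ is not identically zero and that varying $\phi$ genuinely produces new eigenvalues, both of which will follow from the trigonometric-polynomial dependence of $a^c(\phi)$ on $\phi$ together with the fact that $\spec(A^c)$ is a genuinely one-dimensional subset of $\C$.
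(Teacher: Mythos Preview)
Your reduction to the polynomial identity $p_b\circ p_k=p_c$ and the chain of equivalences via Corollary \ref{cor1} is exactly what the paper does. Where you diverge is in establishing that identity: you start from an eigenvector $v$ of $a^c(\phi)$, project it onto the $a^b$-block, and then need a genericity/dimension argument to ensure the projection is nonzero for enough $\lambda$. This works---your discriminant sketch is correct, since by Lemma \ref{lemma1} the characteristic polynomial of $a^c(\phi)$ is $\pm(p_c(\lambda)-2\cos\phi)$, so repeated roots occur only when $2\cos\phi$ is one of the finitely many critical values of $p_c$, and distinct values of $\cos\phi$ give disjoint root sets---but it is more machinery than needed.

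The paper runs the argument in the opposite direction and thereby bypasses eigenvectors and genericity entirely. It starts from an eigenvalue $\mu$ of $a^b(\phi)$; the inclusion $\spec(a^b(\phi))\subset\spec(p_k(a^c(\phi)))$ from Corollary \ref{cor2} together with the spectral mapping theorem for the finite matrix $a^c(\phi)$ immediately produces $\lambda\in\spec(a^c(\phi))$ with $p_k(\lambda)=\mu$, and at this $\lambda$ one has $p_b(p_k(\lambda))=p_b(\mu)=2\cos\phi=p_c(\lambda)$. Since $2\cos\phi$ ranges over all of $[-2,2]$, infinitely many distinct $\lambda$ arise automatically. So the paper's route buys simplicity---no projections, no discriminant, no dimension count---while yours gains nothing extra, though it is not incorrect.
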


\begin{proof}
W.l.o.g. we can assume that $(b_1, \ldots, b_n)$ is even. By Corollary \ref{cor2}, $(c_1, \ldots, c_{nm})$ is even, too. Let $q$ and $r$ be the polynomials given by Lemma \ref{lemma1} corresponding to $b$ and $c$. Also denote by $a^b$ and $a^c$ the symbols of $A^b$ and $A^c$. Fix some $\phi \in [0,2\pi)$ and let $\mu \in \C$ be an eigenvalue of $a^b(\phi)$. Again by Corollary \ref{cor2}, there exists some $\lambda \in \spec(a^c(\phi))$ such that $p(\lambda) = \mu$. Since $q(\mu) = 2\cos(\phi)$ by Proposition \ref{prop1}, we have $q(p(\lambda)) = 2\cos(\phi)$. On the other hand, also $r(\lambda) = 2\cos(\phi)$ by Proposition \ref{prop1}. Thus
\[(q \circ p)(\lambda) = 2\cos(\phi) = r(\lambda)\]
(cf. Figure 4).
Since both $q \circ p$ and $r$ are polynomials and the above argument is valid for every $\phi \in [0,2\pi)$, we conclude that $q \circ p$ and $r$ are equal.

It follows
\[\spec(A^c) = r^{-1}([-2,2]) = p^{-1}(q^{-1}([-2,2])) = p^{-1}(\spec(A^b))\]
by Corollary \ref{cor1}.
\end{proof}

Theorem \ref{thm2} combined with the result $\D \subset \clos(\pi_\infty)$ from \cite{ChaDa} implies the following corollary.

\begin{cor} \label{cor3}
Let $p \in S$. Then $p^{-1}(\pi_\infty) \subset \pi_\infty$ and hence $p^{-1}(\D) \subset p^{-1}(\clos(\pi_\infty)) \subset \clos(\pi_\infty)$.
\end{cor}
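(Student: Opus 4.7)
The plan is to split the corollary into its two assertions. For the inclusion $p^{-1}(\pi_\infty) \subset \pi_\infty$, I would start with an arbitrary $\mu$ satisfying $p(\mu) \in \pi_\infty$. By definition of $\pi_\infty$, there exist some $n \in \N$ and an $n$-periodic sequence $b \in \set{\pm 1}^{\Z}$ with $p(\mu) \in \spec(A^b_{per})$. Applying Theorem \ref{thm2} with this $b$ and with $p = p_k \in S$ produces the sequence $c$ from the proof of Theorem \ref{thm1}, and the equivalence stated there yields $\mu \in \spec(A^c)$. By Corollary \ref{cor2}, $c$ is $nm$-periodic, so $A^c$ is a periodic operator and $\mu \in \pi_\infty$, as desired.

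The second assertion is the chain $p^{-1}(\D) \subset p^{-1}(\clos(\pi_\infty)) \subset \clos(\pi_\infty)$. The first containment is immediate from monotonicity of $p^{-1}$ together with the Chandler-Wilde--Davies bound $\D \subset \clos(\pi_\infty)$ recalled in the introduction. For the second, I would argue by continuity of roots. Pick $\mu \in p^{-1}(\clos(\pi_\infty))$ and choose a sequence $\lambda_n \in \pi_\infty$ with $\lambda_n \to p(\mu)$. Since the roots of the monic polynomial $z \mapsto p(z) - \lambda_n$ depend continuously (as an unordered multiset in the Hausdorff sense) on $\lambda_n$, and $\mu$ is a root of $p(z) - p(\mu)$, one can select roots $\mu_n$ of $p(z) - \lambda_n$ with $\mu_n \to \mu$. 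By the first part of the corollary, each $\mu_n$ lies in $p^{-1}(\pi_\infty) \subset \pi_\infty$, whence $\mu \in \clos(\pi_\infty)$.

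The main obstacle is really the continuity-of-roots step, since the other containments are essentially definitional once Theorem \ref{thm2} is in hand. I would invoke the classical Hausdorff continuity of roots of monic polynomials and, if $\mu$ happens to be a multiple zero of $p(z) - p(\mu)$, work in a small disk around $\mu$ on which the preimages $p^{-1}(\{\lambda_n\})$ are eventually non-empty, then extract the desired sequence $\mu_n$. Everything else amounts to unpacking the definitions of $\pi_\infty$ and $\clos(\pi_\infty)$ and combining the theorems already established.
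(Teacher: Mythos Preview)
Your argument is correct and follows the route the paper intends: the paper does not spell out a proof but simply states that the corollary follows from Theorem~\ref{thm2} together with $\D \subset \clos(\pi_\infty)$, and your write-up is a faithful unpacking of exactly that. The continuity-of-roots step you single out is the right way to pass from $p^{-1}(\pi_\infty)\subset\pi_\infty$ to $p^{-1}(\clos(\pi_\infty))\subset\clos(\pi_\infty)$; equivalently one can phrase it as $p^{-1}(\overline{A})=\overline{p^{-1}(A)}$ for non-constant polynomials, which is the same fact.
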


Recall that $k = (k_1, \ldots, k_m) \in \set{\pm 1}^m$ generates a polynomial $p = p_k \in S$ if $k_{m-1} \neq k_m$, so that $\hat{k} = (k_1, \ldots, k_{m-2},k_m,k_{m-1}) \neq k$ but still $p_k = p_{\hat{k}}$. Thus it is immediate that all $k$ of the form $(1, \ldots, 1, -1, 1)$ and $(-1, \ldots, -1, -1, 1)$ generate a polynomial $p_k \in S$. Indeed, if $k = (1, \ldots, 1, -1, 1)$ and $\hat{k} = (1, \ldots, 1, 1, -1)$, then $A^k_{per}$ and $A^{\hat{k}}_{per}$ are unitarily equivalent by a simple shift. This implies that $S$ contains a countable number of polynomials. However, there are also a lot more than these trivial examples as the following table shows. We conjecture that there are approximately $2^{\left\lceil\frac{m}{2}\right\rceil-1}$ polynomials of degree $m$ in $S$.

\renewcommand{\arraystretch}{1.4}

\begin{center}
\Large{
\begin{tabular}{|c|c|c|} \hline
 No. & $k$                     & $p_k(\lambda)$                                  \\ \hline
 2.1 & $(-1,1)$                & $\lambda^2$                                     \\ \hline
 3.1 & $(1,-1,1)$              & $\lambda^3 - \lambda$                           \\ \hline
 3.2 & $(-1,-1,1)$             & $\lambda^3 + \lambda$                           \\ \hline
 4.1 & $(1,1,-1,1)$            & $\lambda^4 - 2\lambda^2$                        \\ \hline
 4.2 & $(-1,-1,-1,1)$          & $\lambda^4 + 2\lambda^2$                        \\ \hline
 5.1 & $(1,1,1,-1,1)$          & $\lambda^5 - 3\lambda^3 + \lambda$              \\ \hline
 5.2 & $(1,-1,1,-1,1)$         & $\lambda^5 - \lambda^3 + \lambda$               \\ \hline
 5.3 & $(-1,1,-1,-1,1)$        & $\lambda^5 + \lambda^3 + \lambda$               \\ \hline
 5.4 & $(-1,-1,-1,-1,1)$       & $\lambda^5 + 3\lambda^3 + \lambda$              \\ \hline
 6.1 & $(1,1,1,1,-1,1)$        & $\lambda^6 - 4\lambda^4 + 3\lambda^2$           \\ \hline
 6.2 & $(1,-1,-1,1,-1,1)$      & $\lambda^6 - \lambda^2$                         \\ \hline
 6.3 & $(-1,-1,-1,-1,-1,1)$    & $\lambda^6 + 4\lambda^4 + 3\lambda^2$           \\ \hline
 7.1 & $(1,1,1,1,1,-1,1)$      & $\lambda^7 - 5\lambda^5 + 6\lambda^3 - \lambda$ \\ \hline
 7.2 & $(1,1,-1,1,1,-1,1)$     & $\lambda^7 - 3\lambda^5 + 2\lambda^3 + \lambda$ \\ \hline
 7.3 & $(1,-1,1,-1,1,-1,1)$    & $\lambda^7 - \lambda^5 + 2\lambda^3 - \lambda$  \\ \hline
 7.4 & $(1,-1,-1,-1,1,-1,1)$   & $\lambda^7 + \lambda^5 - 2\lambda^3 + \lambda$  \\ \hline
 7.5 & $(-1,1,1,1,-1,-1,1)$    & $\lambda^7 - \lambda^5 - 2\lambda^3 - \lambda$  \\ \hline
 7.6 & $(-1,1,-1,1,-1,-1,1)$   & $\lambda^7 + \lambda^5 + 2\lambda^3 + \lambda$  \\ \hline
 7.7 & $(-1,-1,1,-1,-1,-1,1)$  & $\lambda^7 + 3\lambda^5 + 2\lambda^3 - \lambda$ \\ \hline
 7.8 & $(-1,-1,-1,-1,-1,-1,1)$ & $\lambda^7 + 5\lambda^5 + 6\lambda^3 + \lambda$ \\ \hline
\end{tabular}
}
\captionof{table}{Short list of elements in $S$.}
\end{center}

\begin{rem} \label{remark}
As mentioned in the introduction, we can iterate Corollary \ref{cor3} to get
\[U := \bigcup\limits_{n \in \N} p^{-n}(\D) \subset \bigcup\limits_{n \in \N} p^{-n}(\clos(\pi_\infty)) \subset \clos(\pi_\infty)\]
for every $p \in S$. In other words, $z \in U$ if and only if $\abs{p^n(z)} \leq 1$ for some $n \in \N$. Thus there is clearly a connection to the filled Julia set corresponding to $p$ which is given by
\[J_f(p) := \set{z \in \C : (p^n(z))_{n \in \N} \text{ is bounded}}.\]
(see \cite[Lemma 17.1]{Milnor}). Indeed, the boundary $J(p) := \partial J_f(p)$ (which is usually just called the Julia set corresponding to $p$) is contained in the closure of $\bigcup\limits_{n \in \N} p^{-n}(z)$ for every $z \in \C$ except for at most one point (see \cite[Corollary 14.8(a)]{Falconer}). Hence $J(p) \subset \clos(U) \subset \clos(\pi_\infty)$. Considering the pictures (viii) and (ix) below, it seems natural to conjecture that even the filled Julia set $J_f(p)$ is contained in $\clos(U)$. 
\end{rem}

We conclude with some pictures of subsets of $\Sigma$. The red unit circle serves as a reference.


\renewcommand*{\thesubfigure}{\roman{subfigure}}

\begin{figure}[ht!]
\begin{center}
\begin{subfigure}{0.4\textwidth}
\includegraphics[width = \textwidth,trim = 15mm 5mm 10mm 5mm, clip = true]{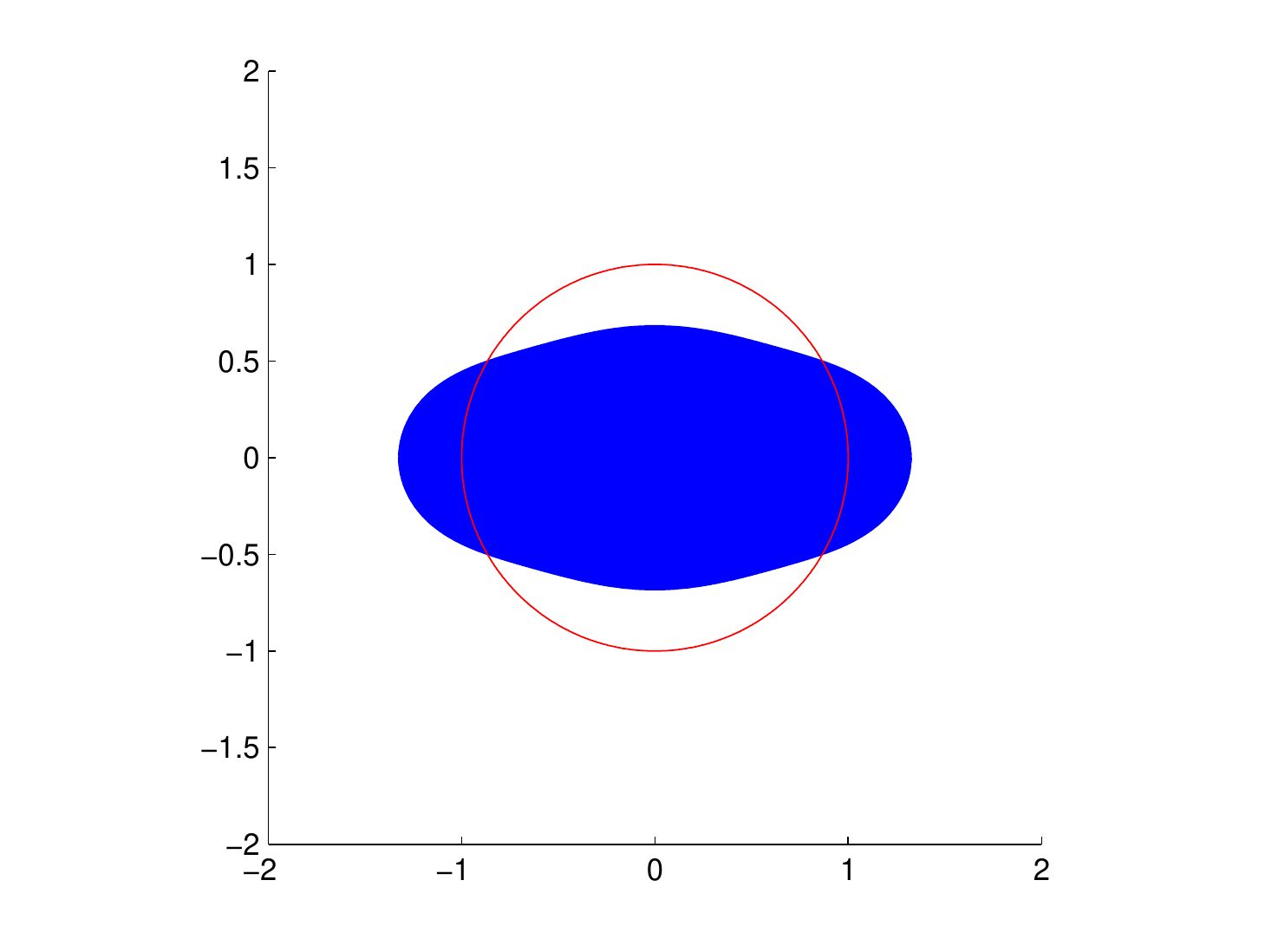}
\caption{\footnotesize $p_k^{-1}(\D)$ for $k = (1,-1,1)$}
\end{subfigure}
\begin{subfigure}{0.4\textwidth}
\includegraphics[width = \textwidth, trim = 15mm 5mm 10mm 5mm, clip = true]{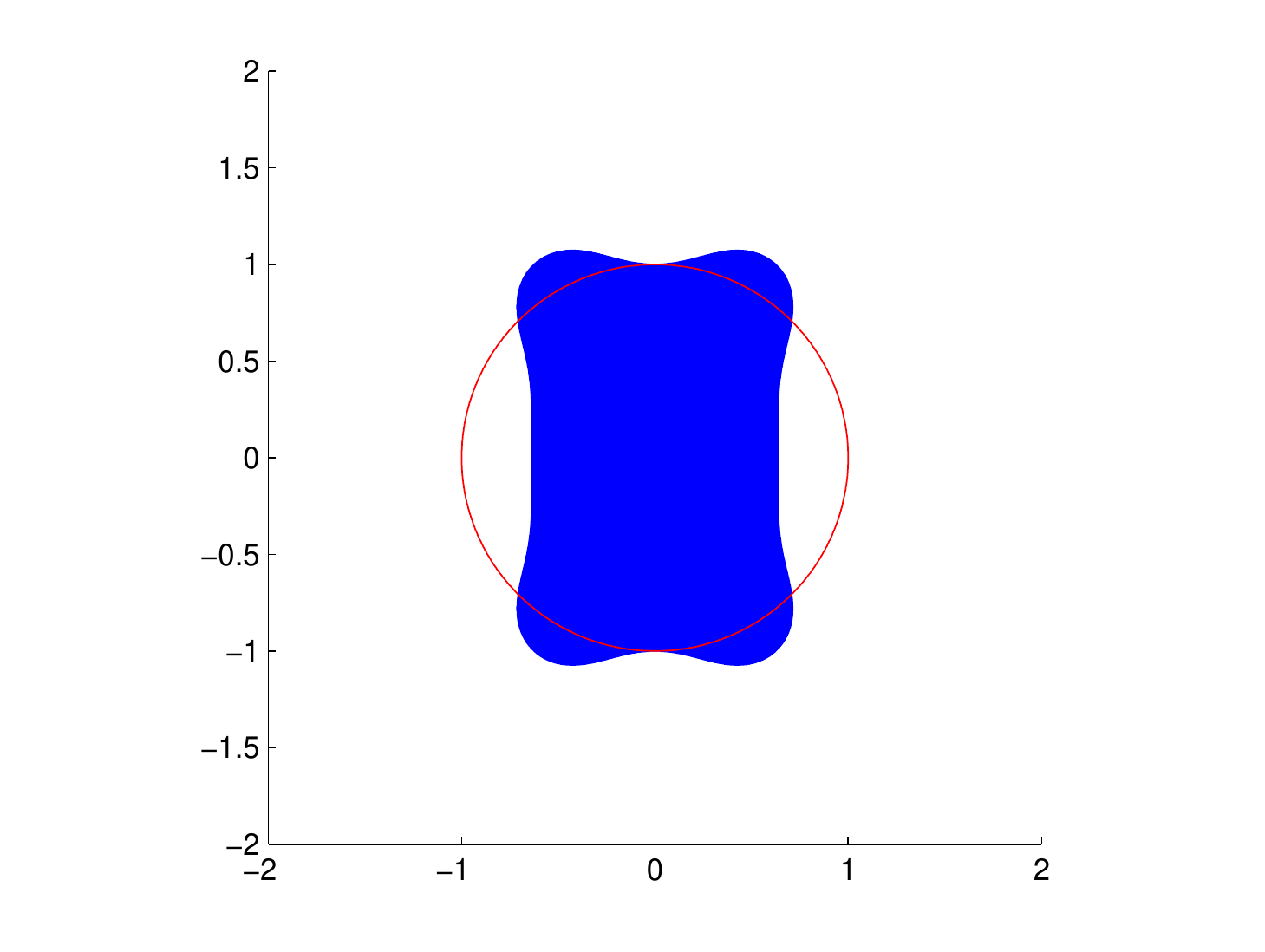}
\caption{\footnotesize $p_k^{-1}(\D)$ for $k = (-1,1,-1,-1,1)$}
\end{subfigure}
\begin{subfigure}{0.4\textwidth}
\includegraphics[width = \textwidth, trim = 15mm 5mm 10mm 5mm, clip = true]{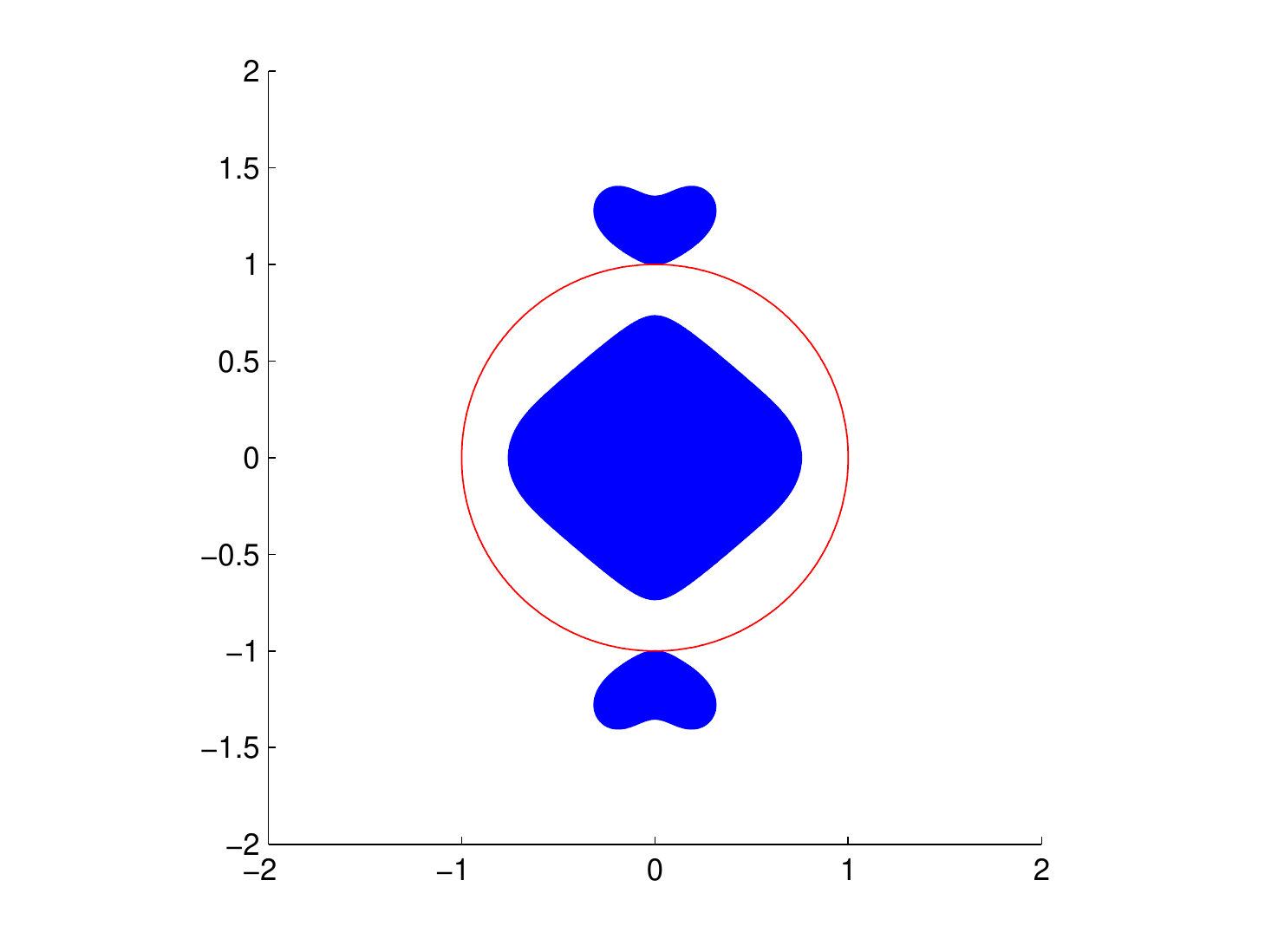}
\caption{\footnotesize $p_k^{-1}(\D)$ for $k = (-1,-1,1,-1,-1,-1,1)$}
\end{subfigure}
\begin{subfigure}{0.4\textwidth}
\includegraphics[width = \textwidth, trim = 15mm 5mm 10mm 5mm, clip = true]{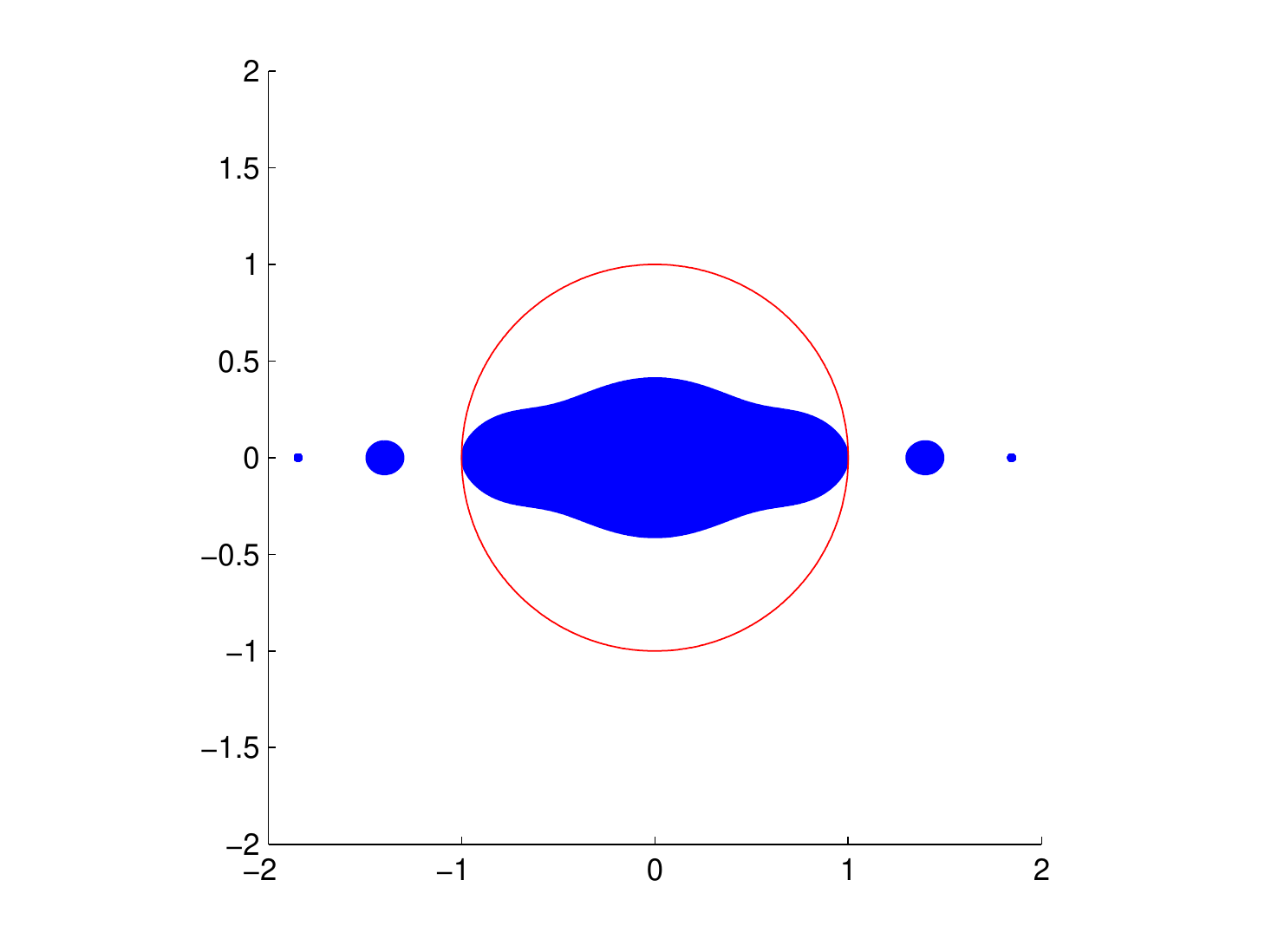}
\caption{\footnotesize $p_k^{-1}(\D)$ for $k = (1,1,1,1,1,1,-1,1)$}
\end{subfigure}
\end{center}
\end{figure}

\begin{figure}[ht!]
\begin{center}
\begin{subfigure}{0.4\textwidth}
\includegraphics[width = \textwidth, trim = 15mm 5mm 10mm 5mm, clip = true]{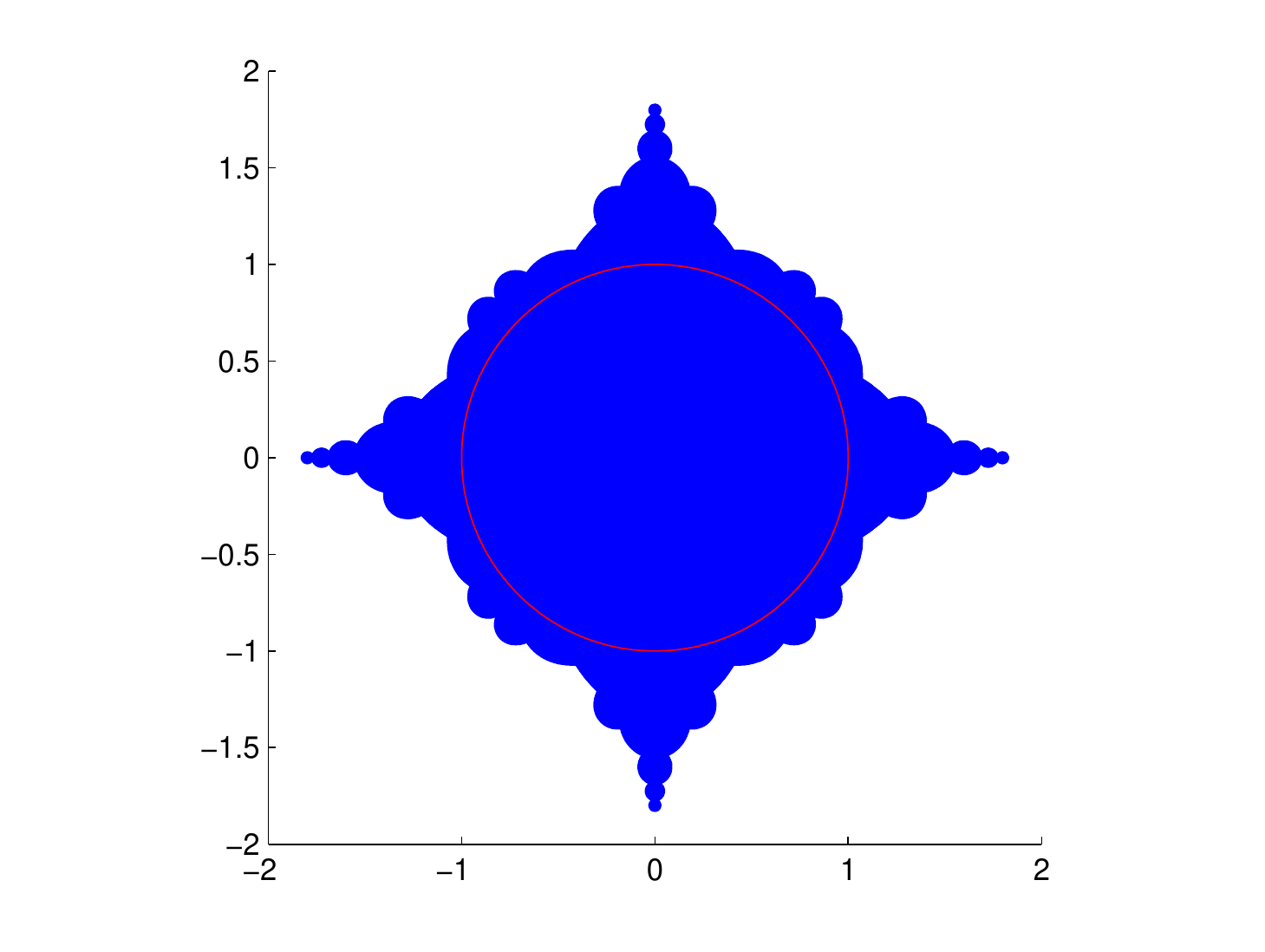}
\caption{\footnotesize $\bigcup\limits_{\substack{p \in S \\ \deg(p) \leq 7}} p^{-1}(\D)$}
\end{subfigure}
\begin{subfigure}{0.4\textwidth}
\includegraphics[width = \textwidth, trim = 15mm 7mm 10mm 5mm, clip = true]{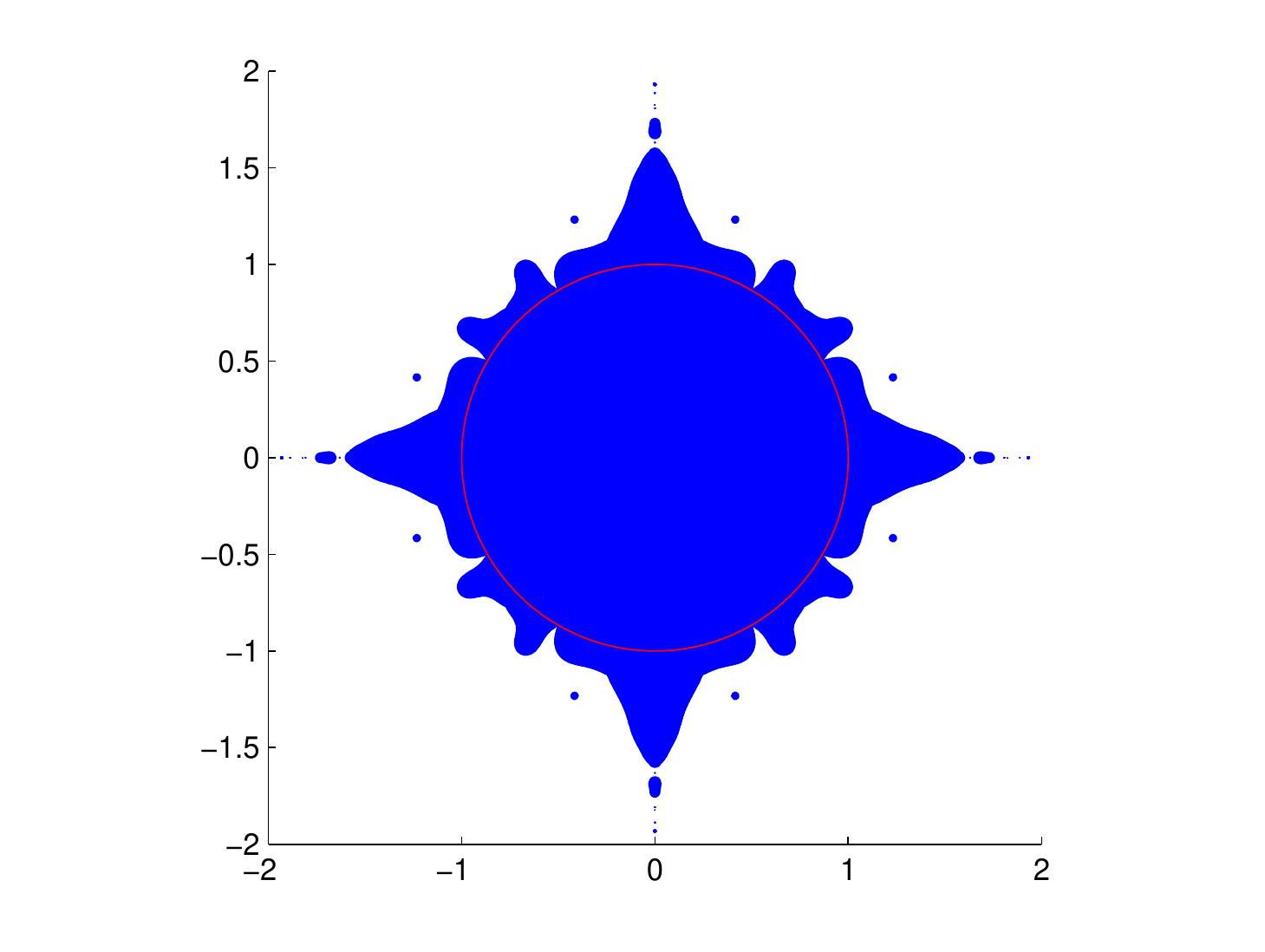}
\caption{\footnotesize $\bigcup\limits_{\substack{p \in S \\ \deg(p) = 12}} p^{-1}(\D)$}
\end{subfigure}
\end{center}
\begin{center}
\begin{subfigure}{0.9\textwidth}
\includegraphics[width = \textwidth, trim = 15mm 5mm 10mm 5mm, clip = true]{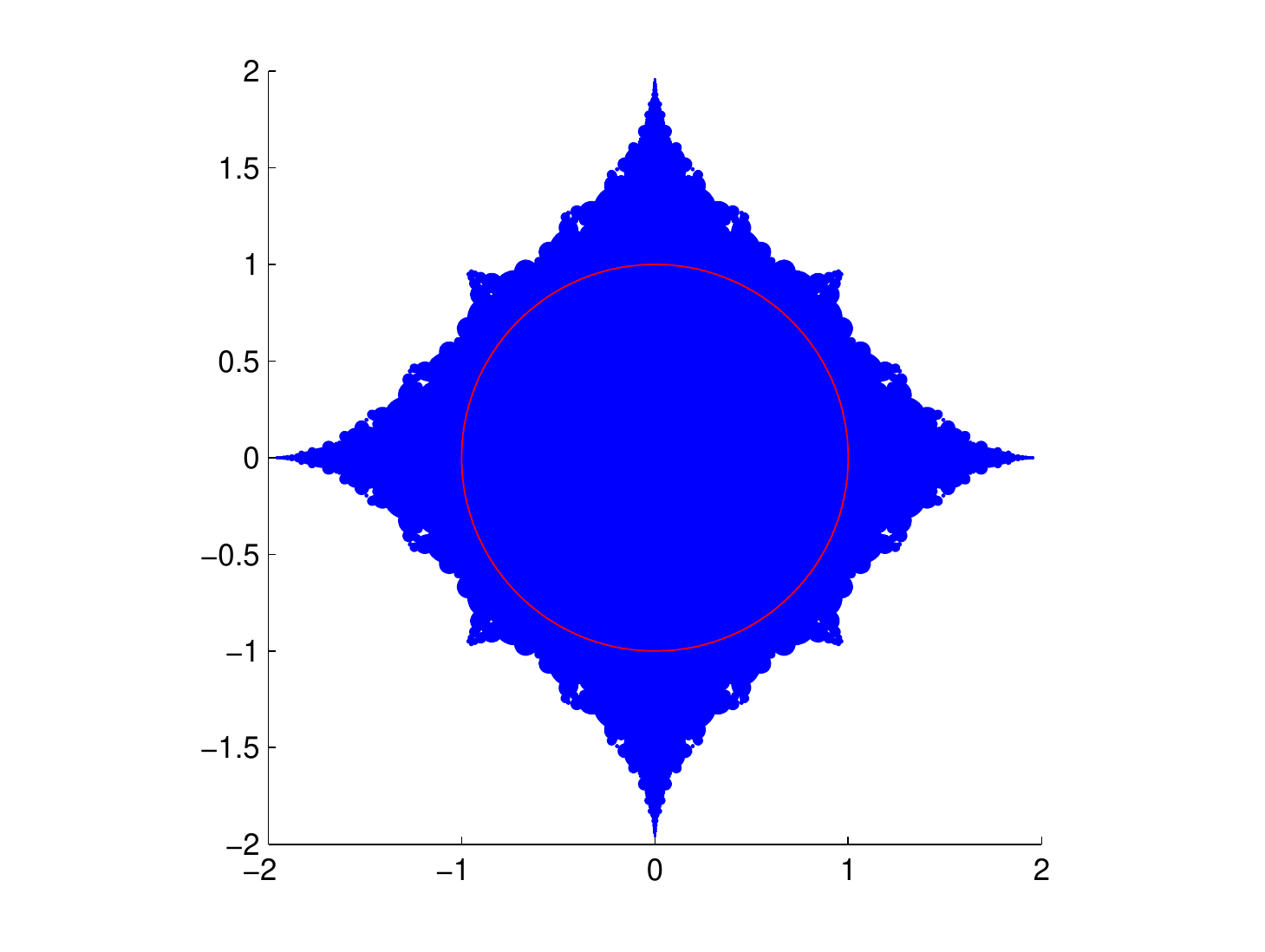}
\caption{\footnotesize $\bigcup\limits_{\substack{p \in S \\ \deg(p) \leq 15}} p^{-1}(\D)$}
\end{subfigure}
\end{center}
\end{figure}

\begin{figure}[ht!]
\begin{center}
\begin{subfigure}{0.4\textwidth}
\includegraphics[width = \textwidth,trim = 15mm 5mm 10mm 5mm, clip = true]{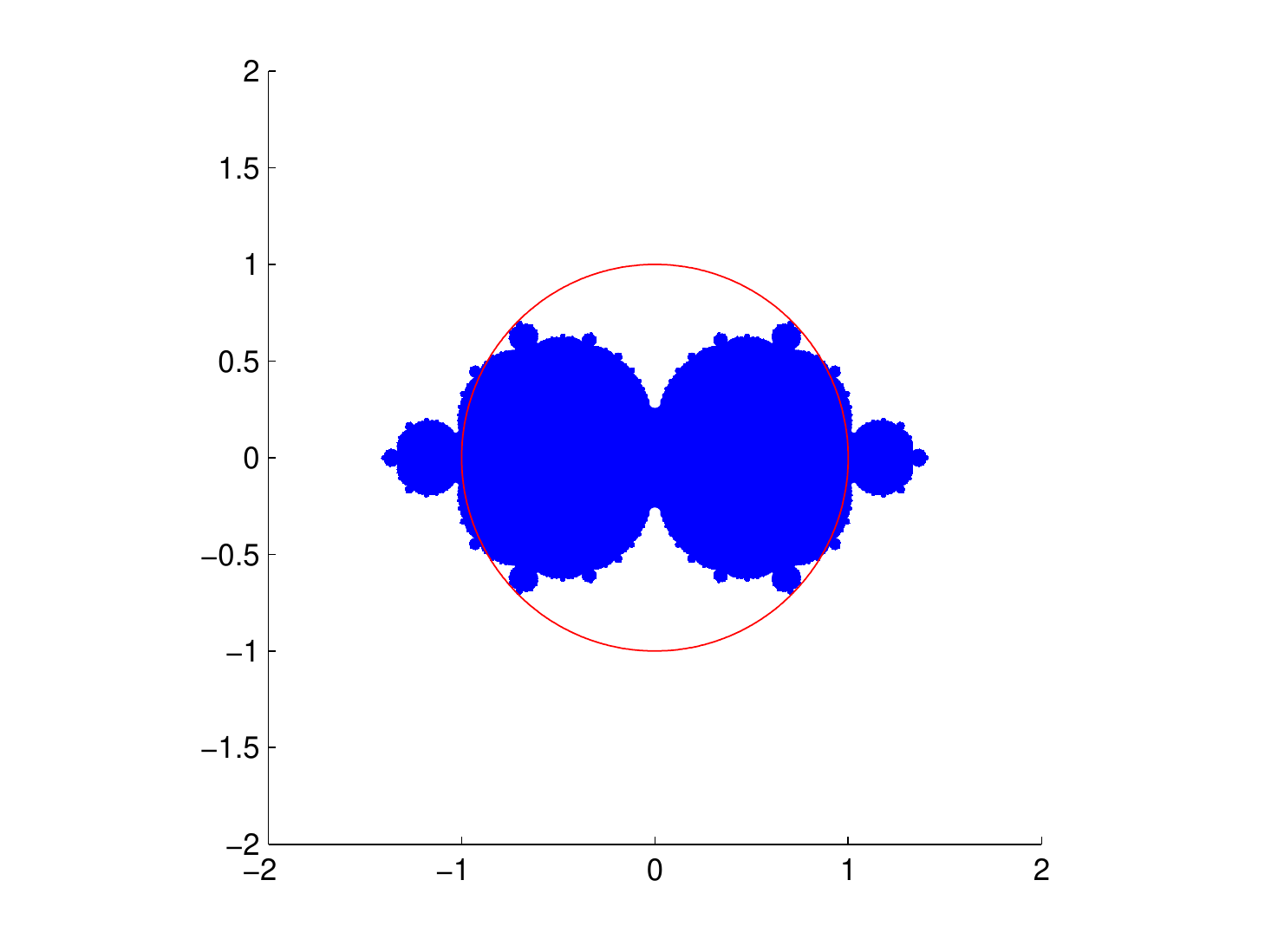}
\caption{\footnotesize $p_k^{-9}(\D)$ for $k = (1,-1,1)$}
\end{subfigure}
\begin{subfigure}{0.4\textwidth}
\includegraphics[width = \textwidth, trim = 15mm 5mm 10mm 5mm, clip = true]{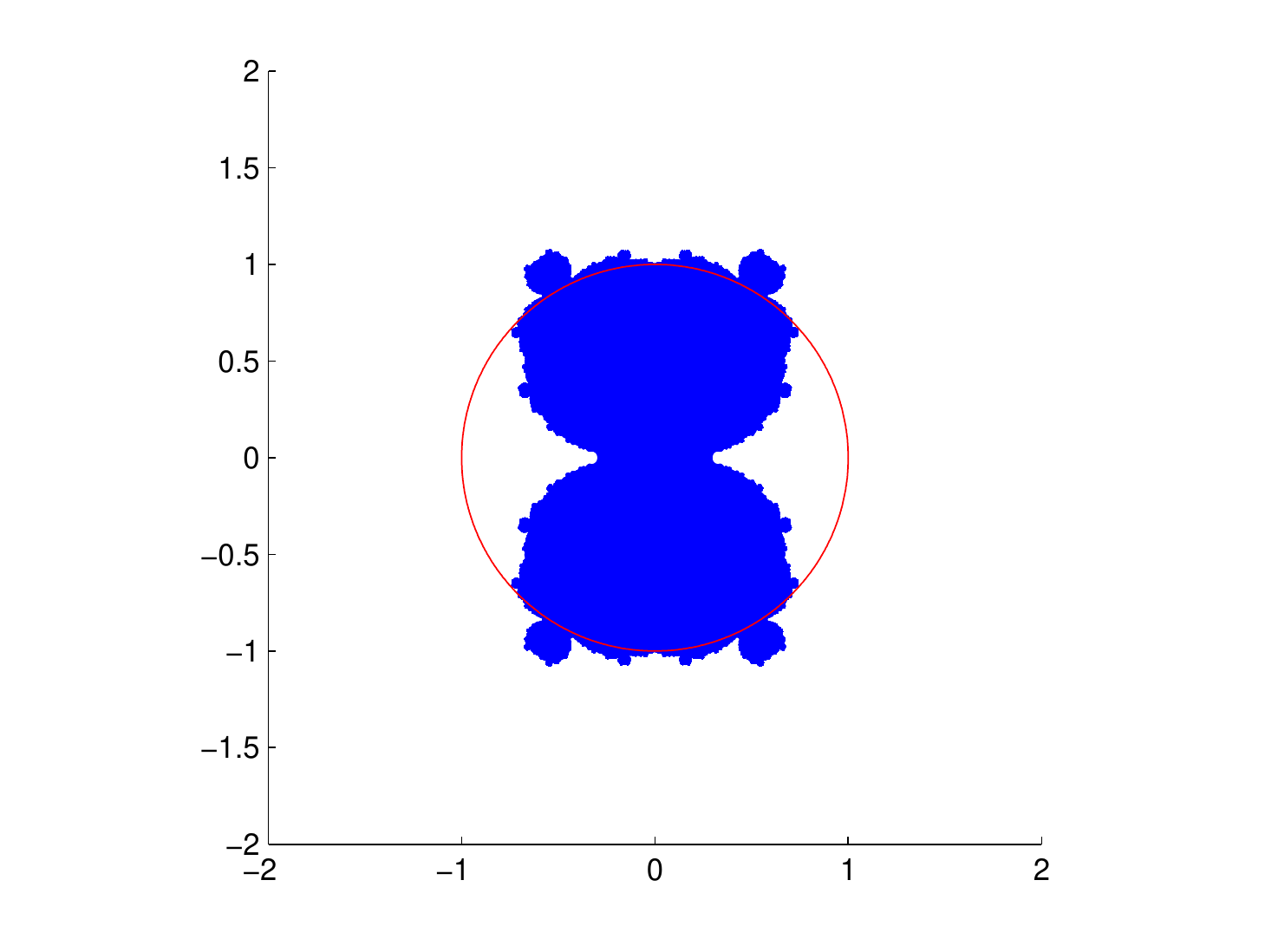}
\caption{\footnotesize $p_k^{-6}(\D)$ for $k = (-1,1,-1,-1,1)$}
\end{subfigure}
\begin{subfigure}{\textwidth}
\includegraphics[width = \textwidth, trim = 15mm 5mm 10mm 5mm, clip = true]{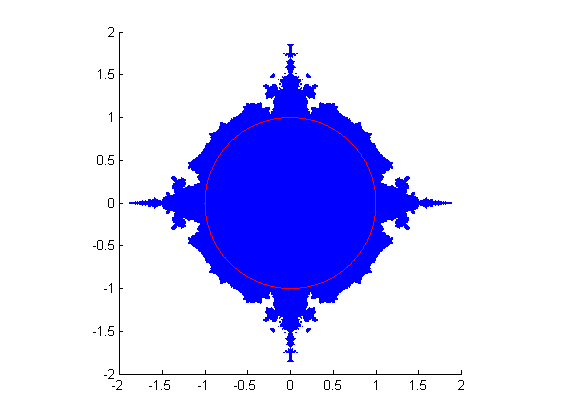}
\caption{\footnotesize $\bigcup\limits_{\substack{p \in S \\ \deg(p) \leq 8}} p^{-5}(\D)$}
\end{subfigure}
\end{center}
\end{figure}

\clearpage

\bigskip

\noindent
{\bf Author's address:}

\medskip

\noindent
Raffael Hagger \hfill{\tt raffael.hagger@tuhh.de}\\
Institute of Mathematics\\
Hamburg University of Technology\\
Schwarzenbergstr. 95 E\\
D-21073 Hamburg\\
GERMANY

\end{document}